\def\ps@pprintTitle{%
\let\@oddhead\@empty
\let\@evenhead\@empty
\def\@oddfoot{\centerline{\thepage}}%
\let\@evenfoot\@oddfoot}
\newtheorem{thm}{Theorem}[section]
\newtheorem{lemma}[thm]{Lemma}
\newtheorem{prop}[thm]{Proposition}
\newtheorem{cor}[thm]{Corollary}
\newtheorem{definition}[thm]{Definition}
\newtheorem{remark}[thm]{Remark}
\newtheorem{ex}[thm]{Example}
\begin{document}
\begin{frontmatter}
\title{Minimizing Closed Geodesics via Critical Points of the Uniform Energy}
\author{Ian Adelstein \corref{cor1}}
\address{Department of Mathematics, Dartmouth College, Hanover, NH 03755 United States}
\cortext[cor1]{Email: adelstein@dartmouth.edu}
\begin{abstract} In this paper we study 1/k-geodesics, those closed geodesics that minimize on any subinterval of length $l(\gamma)/k$. We employ energy methods to provide a relationship between the 1/k-geodesics and what we define as the balanced points of the uniform energy. We show that classes of balanced points of the uniform energy persist under the Gromov-Hausdorff convergence of Riemannian manifolds. Additionally, we relate half-geodesics (1/2-geodesics) to the Grove-Shiohama critical points of the distance function. This relationship affords us the ability to study the behavior of half-geodesics via the well developed field of critical point theory. Along the way we provide a complete characterization of the differentiability of the Riemannian distance function.
\end{abstract}
\begin{keyword} Closed Geodesics \sep Critical Points of the Distance Function \sep Gromov-Hausdorff Convergence \sep Energy Functional
\MSC 53C20 \sep 53C22
\end{keyword}
\end{frontmatter}

%
%
%
\section{Introduction}

A defining property of a geodesic is that it is a locally distance minimizing curve. It is clear that a nontrivial closed geodesic can never be a \emph{globally} distance minimizing curve. Indeed, a closed geodesic can never minimize past half its length: traversing the geodesic in the opposite direction will always provide a shorter path. It is therefore natural to consider the largest interval on which a given closed geodesic is distance minimizing. This led Sormani \cite{Sor} to consider the notion of a 1/k-geodesic.

\begin{definition} A \emph{1/k-geodesic} is a closed geodesic $\gamma \colon S^1 \to M$ which is minimizing on all subintervals of length $l(\gamma)/k$, i.e. $$d(\gamma(t),\gamma(t+2\pi/k))=l(\gamma)/k \hspace{4mm} \forall t \in S^1.$$ 
\end{definition}

In order to study the existence of 1/k-geodesics on compact Riemannian manifolds Sormani employed the uniform energy $E \colon M^k \to \mathbb{R}$, a function first introduced in the study of Morse theory (Cf.~\cite{Mil}).

\begin{definition}\label{ue} The \emph{uniform energy} $E \colon M^k \to \mathbb{R}$ is defined for an element $\bar{x} = (x_1, \ldots , x_k)  \in M^k $ as 
\begin{equation*}E(\bar{x}) = \sum_{i=1}^k \frac{d(x_i , x_{i+1})^2}{1/k} \hspace{9pt} where \hspace{6pt} x_{k+1} = x_1
\end{equation*}
\end{definition}

Sormani focused on the \emph{openly} 1/k-geodesics: those 1/k-geodesics that do not contain cut points at distance $l(\gamma)/k$ and therefore always minimize on some open neighborhood of subintervals of length $l(\gamma)/k$. She established a one-to-one correspondence between the openly 1/k-geodesics on a compact Riemannian manifold and the smooth critical points of the uniform energy \cite[Theorem 10.2]{Sor}. She then asked if her ideas could be extended to achieve a similar relationship between the full set of 1/k-geodesics on a compact Riemannian manifold and some notion of non-smooth critical points of the uniform energy \cite[Remark 10.4]{Sor}. We provide a positive answer to her problem by introducing the balanced points of the uniform energy.

\begin{definition}\label{balanced_intro} 
Let $\widehat{qp}$ denote the collection of initial velocity vectors in $T_qM$ of unit-speed minimizing geodesics joining $q$ to $p$. Let $\bar{x} = (x_1, \ldots , x_k)  \in M^k$. We say $\bar{x}$ is a \emph{balanced point} of the uniform energy if $d(x_{i-1},x_i) = d(x_i, x_{i+1})$ and if there exists a vector $\bar{\xi} = (\xi_1, \ldots \xi_k) \in T_{\bar{x}}M^k $ with $\xi_i \in \widehat{x_i x_{i+1}}$ and $-\xi_i \in \widehat{x_i x_{i-1} }$ for every $i =1, \ldots, k$.
\end{definition}

\begin{thm}\label{combined} The 1/k-geodesics in a compact Riemannian manifold have a one-to-one correspondence with classes of balanced points of the uniform energy in $M^k$ of nonzero energy.
\end{thm}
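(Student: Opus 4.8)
The plan is to construct the correspondence explicitly in both directions and then check bijectivity, leaning on Sormani's theorem for the part of the picture that avoids the cut locus and treating the cut locus as the genuinely new case. In the direction from geodesics to balanced points, let $\gamma$ be a 1/k-geodesic, reparametrized by arc length so that it has period $L=l(\gamma)$, and put $\rho=L/k$. To each $s$ I attach the tuple $\bar{x}_\gamma(s)=\big(\gamma(s),\gamma(s+\rho),\ldots,\gamma(s+(k-1)\rho)\big)$ together with the vector $\bar\xi(s)=\big(\dot\gamma(s),\dot\gamma(s+\rho),\ldots,\dot\gamma(s+(k-1)\rho)\big)$. That $\bar{x}_\gamma(s)$ is a balanced point is immediate from the definition of a 1/k-geodesic: every consecutive distance equals $\rho$, so the uniform energy is $L^2\ne0$; the arc $\gamma|_{[s+(i-1)\rho,\,s+i\rho]}$ realizes $d(x_i,x_{i+1})$ with initial velocity $\dot\gamma(s+(i-1)\rho)$; and since $\gamma$ is smooth at $\gamma(s+(i-1)\rho)$, reversing $\gamma|_{[s+(i-2)\rho,\,s+(i-1)\rho]}$ produces a minimizing geodesic from $x_i$ to $x_{i-1}$ with initial velocity $-\dot\gamma(s+(i-1)\rho)$. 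Letting $s$ range over $\mathbb{R}/L\mathbb{Z}$ and closing up under cyclic relabeling and reversal yields the class $\Phi(\gamma)$; since a balanced point obtained this way records the velocity of $\gamma$, uniqueness of geodesics forces two 1/k-geodesics that share such a point to coincide, so $\Phi$ is injective.

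For the reverse direction, start from a balanced point $\bar{x}$ of nonzero energy with witness $\bar\xi$. Nonzero energy together with $d(x_{i-1},x_i)=d(x_i,x_{i+1})$ forces every consecutive distance to equal a common $\rho=\sqrt{E(\bar{x})}/k>0$. Put $g_i(t)=\exp_{x_i}(t\xi_i)$ for $t\in[-\rho,\rho]$; the two membership conditions on $\xi_i$ say precisely that $g_i$ is a geodesic with $g_i(-\rho)=x_{i-1}$, $g_i(0)=x_i$, $g_i(\rho)=x_{i+1}$, minimizing on $[-\rho,0]$ and on $[0,\rho]$. The concatenation $\Gamma=g_1|_{[0,\rho]}\ast\cdots\ast g_k|_{[0,\rho]}$ is a closed curve of length $L=k\rho$, and I claim it is a smooth closed geodesic: this amounts to the vertex-matching identity $\dot g_i(\rho)=\xi_{i+1}$ for every $i$, equivalently to the statement that the minimizing segment $g_i|_{[0,\rho]}$ from $x_i$ to $x_{i+1}$ coincides with the reversal of the minimizing geodesic $t\mapsto\exp_{x_{i+1}}(-t\,\xi_{i+1})$ from $x_{i+1}$ to $x_i$ supplied by the witness condition at index $i+1$. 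Granting smoothness, one verifies that $\Gamma$ is a 1/k-geodesic: $d(\Gamma(s),\Gamma(s+\rho))\le\rho$ always with equality at the partition points $s\in\rho\mathbb{Z}$, and one promotes minimality on this one partition to minimality on every $\rho$-translate by a continuity argument governed by the structure of the cut locus — here our complete characterization of the differentiability of $d$ is the natural tool. Since $\bar{x}=\bar{x}_\Gamma(0)$ after relabeling, this makes $\Phi$ surjective, and the two assignments are inverse to one another.

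The main obstacle is the vertex-matching identity. When no $x_{i+1}$ lies in the cut locus of $x_i$, the minimizing geodesic between consecutive points is unique, the matching is automatic, and the whole statement collapses onto Sormani's correspondence between openly 1/k-geodesics and smooth critical points of $E$ — one only has to recognize that her smooth critical points are exactly the balanced points for which no $x_{i+1}$ lies in the cut locus of $x_i$. The new content is the cut-point case, where several minimizing geodesics join $x_i$ to $x_{i+1}$ and one must argue that a single witness vector $\bar\xi$ pins down a coherent family that closes up smoothly. I would handle this by approximating $\bar{x}$ by nearby configurations whose witnesses lie off the cut locus and invoking the differentiability characterization to pass to the limit, and by working with the class rather than an isolated point — sliding to representatives of the class along which $d$ is differentiable and then reducing to the smooth case. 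Turning this into an honest proof that $\Gamma$ closes up and is genuinely a 1/k-geodesic, rather than merely minimizing on one partition, together with checking that $\Phi$ descends to classes, is the bulk of the remaining work.
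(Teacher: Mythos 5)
Your forward direction matches the paper's (Lemma~\ref{lem1}): each 1/k-geodesic sweeps out a family of balanced points witnessed by its own velocity vectors. The reverse direction, however, contains a genuine gap that cannot be repaired along the lines you propose. You start from a single balanced point $\bar{x}$ with witness $\bar\xi$ and try to establish the vertex-matching identity $\dot g_i(\rho)=\xi_{i+1}$ so that the concatenated minimizers close up into a smooth closed geodesic. That identity is false in general: the witness condition only says that \emph{some} minimizing geodesic from $x_i$ to $x_{i+1}$ leaves with velocity $\xi_i$ and \emph{some} minimizing geodesic from $x_{i+1}$ to $x_i$ leaves with velocity $-\xi_{i+1}$; when $(x_i,x_{i+1})$ are ordinary cut points these need not be reversals of one another, so the chain need not link up. The paper's own counterexample is the five-tuple of midpoints on the doubled pentagon, a balanced point with \emph{no} associated closed geodesic at all, so no approximation by off-cut-locus configurations or passage to the limit can produce one. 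There is a second, independent failure: even when the concatenation does close up smoothly, minimality on your one partition does not promote to minimality on every $\rho$-translate. The four corners of the doubled square form a balanced point whose associated closed geodesic (the boundary) is not a 1/4-geodesic, because adjacent edge-midpoints are joined by a strictly shorter chord across a face; your proposed continuity argument would break exactly there.

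The paper avoids both failures by building them into the definitions rather than proving them: a \emph{class} of balanced points is by definition the orbit of a \emph{rotating} balanced point, i.e., one admitting an associated closed geodesic $\gamma$ such that \emph{every} rotated tuple is again a balanced point with the \emph{same} associated geodesic $\gamma$ --- and that condition is precisely the statement that $\gamma$ is a 1/k-geodesic. With this definition the reverse direction (Lemma~\ref{relation}) is essentially immediate, and Theorem~\ref{combined} reduces to combining Sormani's smooth case (Theorem~\ref{sormani3}) with the strict case (Theorem~\ref{strict}). Your proposal is in effect attempting to prove the stronger --- and false --- statement that every balanced point of nonzero energy yields a 1/k-geodesic; the correct statement only concerns those balanced points that belong to a class in the paper's sense, and identifying the right notion of class is where the actual content lies.
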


Note that closed geodesics may disappear under the Gromov-Hausdorff convergence of Riemannian manifolds (see Example~\ref{sphere2}). In contrast, Sormani demonstrated the persistence of 1/k-geodesics under Gromov-Hausdorff convergence (Theorem~\ref{conv2}).  Classes of smooth critical points of the uniform energy may disappear even under the smooth convergence of Riemannian manifolds (see Example~\ref{sphere}). We combine Sormani's Theorem~\ref{conv2} with our Theorem~\ref{combined} to show that classes of balanced points of the uniform energy persist under Gromov-Hausdorff convergence.

\begin{cor}\label{persist} Let $M_i$ be a sequence of compact Riemannian manifolds converging to a compact Riemannian manifold $M$ in the Gromov-Hausdorff sense. Then any sequence of classes of balanced points of the uniform energies $E \colon M^k_i \to \mathbb{R}$ admits a subsequence which converges to a class of balanced points of the uniform energy $E \colon M^k \to \mathbb{R}$ or to the trivial class. 
\end{cor}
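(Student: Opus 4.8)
The plan is to transport the statement across the correspondence of Theorem~\ref{combined} in both directions, using Sormani's persistence result Theorem~\ref{conv2} as the engine on the geodesic side, and to isolate the two degeneracies --- a balanced point of zero energy, and a 1/k-geodesic whose length tends to zero --- that account for the ``trivial class'' alternative in the conclusion.

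So let $\{[\bar{x}^{(i)}]\}$ be a sequence of classes of balanced points of the uniform energies $E \colon M_i^k \to \mathbb{R}$. First I would pass to a subsequence along which either every $[\bar{x}^{(i)}]$ is the trivial class, or none of them is. In the first case a representative is $(p_i,\dots,p_i)$ with $p_i \in M_i$, and choosing convergent points along a further subsequence (possible since $M_i \to M$ with $M$ compact) exhibits the limit as the trivial class. So assume each $[\bar{x}^{(i)}]$ has nonzero energy. By Theorem~\ref{combined} each $[\bar{x}^{(i)}]$ then corresponds to a 1/k-geodesic $\gamma_i$ in $M_i$, with a representative $\bar{x}^{(i)}$ given by the $k$ equally spaced points $\gamma_i(t_i + 2\pi j/k)$, $j=0,\dots,k-1$, for some $t_i \in S^1$; note $E(\bar{x}^{(i)}) = l(\gamma_i)^2$.

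Next I would apply Theorem~\ref{conv2} to the sequence $\gamma_i$ to extract a subsequence converging to a closed geodesic $\gamma$ in $M$. If $l(\gamma_i) \to 0$, the $k$ sample points collapse to a single point of $M$ and $[\bar{x}^{(i)}]$ converges to the trivial class. Otherwise $\gamma$ is a genuine 1/k-geodesic in $M$; Theorem~\ref{combined} associates to it a class $[\bar{x}]$ of balanced points of nonzero energy in $M^k$, represented by the $k$-tuple of equally spaced points of $\gamma$. Passing to a subsequence with convergent basepoints $t_i \to t$ and transporting along the Gromov-Hausdorff approximations realizing $\gamma_i \to \gamma$, the sample tuples $(\gamma_i(t_i + 2\pi j/k))_j$ converge to $(\gamma(t + 2\pi j/k))_j$, a representative of $[\bar{x}]$; hence $[\bar{x}^{(i)}] \to [\bar{x}]$.

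The step I expect to be the real obstacle is the last one: reconciling the notion of convergence of 1/k-geodesics supplied by Theorem~\ref{conv2} with convergence of classes of balanced points in $M^k$. Handling it requires unwinding the correspondence of Theorem~\ref{combined} --- a class of balanced points is exactly the orbit, under rotation of $S^1$ and orientation reversal, of the tuple of $k$ equally spaced points along the associated 1/k-geodesic --- and observing that the velocity data $\bar{\xi}$ required by Definition~\ref{balanced_intro} for the limit tuple is produced automatically by the velocities of $\gamma$ at those points, so no separate limiting argument for the $\xi_i$ is needed. Everything else is routine subsequence bookkeeping, together with the straightforward observation that collapsing geodesics yield tuples converging to the trivial class.
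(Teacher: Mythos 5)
Your proposal is correct and follows essentially the same route as the paper: transfer to $1/k$-geodesics via Theorem~\ref{combined}, invoke Sormani's persistence result (Theorem~\ref{conv2}) to extract a convergent subsequence, and transfer back via Theorem~\ref{combined}, with the trivial limit accounting for the trivial class. The paper's proof is in fact terser than yours --- it does not spell out the zero-energy case or the identification of tuple convergence with geodesic convergence, both of which you handle explicitly.
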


We next turn our attention to the half-geodesics: those closed geodesics that minimize on any subinterval of length $l(\gamma)/2$. We first note that every closed manifold with non-trivial fundamental group admits a half-geodesic. Indeed, the shortest non-contractible closed geodesic, or the systole of the manifold, will always be a half-geodesic \cite[Lemma 4.1]{Sor}. Using Clairaut's relation, Wing Kai Ho \cite{Ho} produced surfaces diffeomorphic to the $2$-sphere that do not admit half-geodesics. In \cite{Ade}, the author produced surfaces diffeomorphic to the $2$-sphere that admit exactly $n$ half-geodesics for every $n\geq 0$.

We provide a relationship between the half-geodesics (1/2-geodesics) and the Grove-Shiohama critical points of the distance function (Definition~\ref{GSdef2} and Proposition~\ref{GS2}). This notion of a critical point for the distance function was introduced by Grove and Shiohama \cite{Gro} during their proof of the celebrated diameter sphere theorem (Theorem~\ref{GSthm2}). Further work on the Grove-Shiohama notion of critical points was conducted by Grove-Petersen \cite{GP}, Gromov \cite{Gromov}, and Abresch-Gromoll \cite{AG}. We employ techniques from this field to study the behavior of the half-geodesics and establish the following result.

\begin{thm}\label{behavior} Let $M$ be a complete Riemannian manifold with sectional curvature $K \geq H>0$. Assume $M$ admits a half-geodesic $\gamma \colon S^1 \to M$ with $l(\gamma) > \frac{\pi}{\sqrt{H}}$. Then any closed geodesic which intersects $\gamma$ must have length at least $l(\gamma)$. Any half-geodesic which intersects at $\gamma(t)$ must also intersect at $\gamma(t+\pi)$ and have length equal to $l(\gamma)$.
\end{thm}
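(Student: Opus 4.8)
The plan is to use the curvature lower bound $K \geq H > 0$ together with the Grove–Shiohama critical point machinery (Proposition~\ref{GS2}, or rather its underlying mechanism) applied at points of $\gamma$. Write $L = l(\gamma)$ and recall that since $\gamma$ is a half-geodesic, for every $t$ the antipodal point $\gamma(t+\pi)$ realizes the distance $L/2$ from $\gamma(t)$, and there are (at least) two minimizing geodesics from $\gamma(t)$ to $\gamma(t+\pi)$, namely the two arcs of $\gamma$. These two arcs leave $\gamma(t)$ with opposite velocities $\pm\dot\gamma(t)$, so $\gamma(t)$ is a Grove–Shiohama critical point of the function $d(\gamma(t+\pi), \cdot)$: every direction $v \in T_{\gamma(t)}M$ makes an angle $\leq \pi/2$ with one of $\pm\dot\gamma(t)$.

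First I would prove the length statement. Suppose $\sigma$ is a closed geodesic meeting $\gamma$ at $p = \gamma(t) = \sigma(0)$. Let $q = \gamma(t+\pi)$, so $d(p,q) = L/2$. The Toponogov comparison theorem, in the form used by Grove–Shiohama, gives an estimate: if $\sigma$ travels a geodesic arc of length $s$ from $p$, then because $p$ is critical for $d(q,\cdot)$ and $K \geq H$, one can bound $d(q, \sigma(s))$ from below — more precisely, the critical point condition forces $\sigma(s)$ to remain at distance $\geq$ (something controlled by $H$) from $q$, and the hypothesis $L/2 > \frac{\pi}{2\sqrt H}$ rules out $\sigma$ being too short by the usual "no small loops through a critical point in positive curvature" argument (Klingenberg-type injectivity radius / Toponogov hinge estimate). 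Concretely, if $l(\sigma) < L$, I would derive a contradiction by showing $\sigma$ would have to backtrack past $q$ or create a geodesic hinge violating the Toponogov angle bound at $q$. I expect this to follow the template of the proof of Theorem~\ref{GSthm2}.

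For the second statement, suppose $\tau$ is a half-geodesic meeting $\gamma$ at $p = \gamma(t) = \tau(0)$ but \emph{not} passing through $q = \gamma(t+\pi)$. Consider the point $r = \tau(\pi)$, the antipode of $p$ along $\tau$, with $d(p,r) = l(\tau)/2$. Since $\tau$ is a half-geodesic, $p$ is a Grove–Shiohama critical point of $d(r,\cdot)$, and similarly $q$ "sees" $p$ as critical. I would play the two critical point conditions against each other: the arcs of $\gamma$ through $p$ and the arcs of $\tau$ through $p$ give four unit vectors in $T_pM$, in two antipodal pairs, and Toponogov comparison relates the angles between them to the distances $d(q,r)$, $L/2$, $l(\tau)/2$. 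If $\tau$ does not pass through $q$, these vectors are in "general position" and the four overlapping angle-$\leq\pi/2$ constraints become incompatible with $K \geq H$ and the length hypothesis — forcing $\dot\tau(0) = \pm\dot\gamma(t)$, i.e. $\tau$ and $\gamma$ share a geodesic arc and hence (being geodesics) coincide on it, so $\tau$ passes through $\gamma(t+\pi)$; the same rigidity then pins $l(\tau) = L$.

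The main obstacle will be the second statement: extracting the precise rigidity that a half-geodesic through $\gamma(t)$ must pass through $\gamma(t+\pi)$. The first statement is a fairly direct application of the "long minimizing geodesic forces large closed geodesics in positive curvature" circle of ideas, but the second requires simultaneously exploiting that \emph{both} $\gamma$ and $\tau$ are half-geodesics, and squeezing the Toponogov hinge inequalities until the only non-contradictory configuration is the degenerate one where the geodesics overlap. I would expect to need the equality/rigidity case of Toponogov's theorem (Cheeger–Ebin), or a careful first-variation argument at $\gamma(t)$, to close this step.
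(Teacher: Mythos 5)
Your setup and your plan for the first assertion are essentially the paper's: the half-geodesic property gives $\pm\dot\gamma(t)\in\widehat{pq}$ with $p=\gamma(t)$, $q=\gamma(t+\pi)$, so $p$ is a Grove--Shiohama critical point of $d_q$; one then forms the hinge at $p$ between the closed geodesic $\sigma$ (traversed as a loop of length $l(\sigma)$ from $p$ back to $p$) and a minimal segment $\tau$ from $p$ to $q$ making angle $\leq\pi/2$ with $\dot\sigma(0)$, and the spherical law of cosines in the Toponogov comparison yields $0>\cos(B\sqrt{H_2})\geq\cos(D\sqrt{H_2})\geq\cos(B\sqrt{H_2})\cos(C\sqrt{H_2})>\cos(B\sqrt{H_2})$, a contradiction, where $B=l(\gamma)/2$, $C=l(\sigma)$ and $B\sqrt{H_2}>\pi/2$ is exactly where the hypothesis $l(\gamma)>\pi/\sqrt{H}$ enters. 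You have not carried out this computation, but the mechanism you name is the right one (the paper attributes it to Berger's minimal diameter argument).

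The second assertion is where your plan breaks down. You propose to show that the angle constraints at the single point $p$ force $\dot\tau(0)=\pm\dot\gamma(t)$, so that $\tau$ and $\gamma$ overlap. That conclusion is false: on the round sphere (with $H$ chosen slightly below the curvature so the hypothesis holds) every great circle is a half-geodesic, and two great circles through $p$ meet again at $-p$ without coinciding. The theorem only asserts that $\tau$ passes through $q=\gamma(t+\pi)$, not that it agrees with $\gamma$, so any argument whose endpoint is $\dot\tau(0)=\pm\dot\gamma(t)$ proves too much and cannot close. Moreover, in $T_pM$ (dimension $\geq 2$) two antipodal pairs of unit vectors impose no incompatibility among angle-$\leq\pi/2$ conditions, so the ``four vectors in general position'' mechanism has no teeth. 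The paper's argument instead works at \emph{three} points: set $x=\tau(\pi)$ and suppose $x\neq q$. Criticality of $x$ for $d_p$ (from $\tau$ being a half-geodesic) and mutual criticality of $p$ and $q$ (from $\gamma$) let one choose minimal segments forming a geodesic triangle on $\{p,q,x\}$ each of whose angles is $\leq\pi/2$; the triangle version of Toponogov then produces a comparison triangle in $S_H$ with all angles $\leq\pi/2$, which must lie in an octant and hence has all sides $\leq\frac{\pi}{2\sqrt{H}}$, contradicting $d(p,q)=l(\gamma)/2>\frac{\pi}{2\sqrt{H}}$. This is the Grove--Shiohama diameter-sphere argument, and it requires the triangle (not hinge) comparison and the octant bound, neither of which appears in your sketch; no equality/rigidity case of Toponogov is needed.
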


The paper proceeds as follows. In Section~\ref{sec_diff} we give the proof of a folk theorem that completely characterizes the differentiability of the distance function (Theorem~\ref{diff_cut_locus}). This result is applied in Section~\ref{critical} to study  the differentiability of the uniform energy. We are able to strengthen the standard relationship from Morse theory between the critical points of the uniform energy and the derivatives of its component distance functions (Theorem~\ref{sormani}). In Section~\ref{geo} we introduce the balanced points of the uniform energy and provide the proof of Theorem~\ref{combined}. In Section~\ref{balance5} we examine the behavior of the uniform energy functional under Gromov-Hausdorff convergence and prove Corollary~\ref{persist}. In Section~\ref{half2} we detail the relationship between half-geodesics and the Grove-Shiohama critical points of the distance function. It is here that we prove Theorem~\ref{behavior} and show how it applies to the round sphere, to oblate ellipsoids, and to real projective space (Examples~\ref{round}, \ref{oblate}, \ref{proj}).

%
%
%
%
\section{Differentiability of the Distance Function}\label{sec_diff}

Let $M$ be a complete Riemannian manifold. It is well known that the distance function $d_p \colon M \to \mathbb{R}$ is a continuous function on $M$ and is of class $C^{\infty}$ (i.e.~smooth) away from the cut locus of $p$ and the point $p$ itself (Cf.~\cite{Sak}). Less well known, and apparently absent from the literature, is the fact that $d_p$ is indeed differentiable at a certain subset of the cut locus of $p$. The main result of this section is Theorem~\ref{diff_cut_locus} which provides a complete characterization of the differentiability of the distance function $d_p$. In addition to filling a gap in the literature, this result will allow us to explore the differentiability of the uniform energy in the following section.

Let $M$ be a complete Riemannian manifold and $p \in M$ with $\gamma \colon [0,\infty) \to M$ a normalized geodesic, $\gamma(0)=p$. By continuity, the set of numbers $t >0$ such that $t=d(p,\gamma(t))$ is of the form $[0, t_0]$ or $[0, \infty)$. In the first case we say $\gamma(t_0)$ is the \emph{cut point} of $p$ along $\gamma$, and in the second case we say such a cut point does not exist. We define the \emph{cut locus} of p, denoted $C(p)$, to be the collection of all such cut points. A basic fact about the cut locus is as follows.

\begin{prop}[Cf.~\cite{doC}] Let $\gamma(t_0)$ be the cut point of $p=\gamma(0)$ along $\gamma$. Then
\begin{description}
\item{a)} either $\gamma(t_0)$ is the first conjugate point of $p$ along $\gamma$,
\item{b)} or there exists a geodesic $\sigma \neq \gamma$ from $p$ to $\gamma(t_0)$ such that $l(\sigma)=l(\gamma)$.
\item{} Conversely if (a) or (b) is satisfied, then there exists $\tilde{t} \in (0,t_0]$ such that $\gamma(\tilde{t})$ is the cut point of p along $\gamma$.
\end{description}
\end{prop}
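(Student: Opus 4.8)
The plan is to establish the two directions separately, both hinging on the interplay between minimizing geodesics emanating from $p$ and the singularities of $\exp_p$. For the forward assertion, I would begin with a sequence $t_n \downarrow t_0$ and use that $\gamma|_{[0,t_n]}$ is not minimizing: by completeness of $M$, for each $n$ choose a unit-speed minimizing geodesic $\sigma_n$ from $p$ to $\gamma(t_n)$, necessarily of length $\ell_n = d(p,\gamma(t_n)) < t_n$, so in particular $\sigma_n \neq \gamma|_{[0,t_n]}$. Since the unit sphere in $T_pM$ is compact, after passing to a subsequence the initial velocities $\sigma_n'(0)$ converge to a unit vector $v$, while $\ell_n \to t_0$ by continuity of the distance function. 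Continuity of $\exp_p$ then shows that $\sigma(t) := \exp_p(tv)$ is a geodesic with $\sigma(t_0) = \gamma(t_0)$ and length $t_0 = d(p,\gamma(t_0))$, hence minimizing. Now comes the dichotomy: if $v \neq \gamma'(0)$ then $\sigma \neq \gamma$ and conclusion (b) holds; if instead $v = \gamma'(0)$, I would show $\exp_p$ is singular at $t_0\gamma'(0)$, since otherwise the two distinct vectors $\ell_n\sigma_n'(0)$ and $t_n\gamma'(0)$ of $T_pM$ — both converging to $t_0\gamma'(0)$ and both sent by $\exp_p$ to $\gamma(t_n)$ — would violate injectivity on a neighborhood. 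Thus $\gamma(t_0)$ is conjugate to $p$ along $\gamma$, and it is the \emph{first} such point, because $\gamma|_{[0,t_0]}$ is minimizing while no geodesic minimizes strictly beyond its first conjugate point (the index form argument).

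For the converse, I would assume (a) or (b) and show that $\gamma$ does not minimize on $[0,t_1]$ for any $t_1 > t_0$; since $\gamma$ does minimize for small times, the set $\{t>0 : d(p,\gamma(t))=t\}$ is then a bounded interval $[0,\tilde t]$ with $0 < \tilde t \le t_0$, so the cut point $\gamma(\tilde t)$ exists as claimed. If (a) holds this is the classical consequence of having a conjugate point at or before $t_0 < t_1$. If (b) holds, I would concatenate $\sigma$ with $\gamma|_{[t_0,t_1]}$ to obtain a broken curve from $p$ to $\gamma(t_1)$ of length exactly $t_1 = l(\gamma|_{[0,t_1]})$; its corner at $\gamma(t_0)$ is genuine, because $\sigma'(t_0) = \gamma'(t_0)$ would force $\sigma = \gamma$ by uniqueness of geodesics, contradicting $\sigma \neq \gamma$. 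Rounding this corner strictly shortens the curve, so $d(p,\gamma(t_1)) < t_1$.

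The main obstacle is the degenerate branch of the forward dichotomy — the case $v = \gamma'(0)$, where $\sigma = \gamma$ and conclusion (b) is unavailable. Here one must extract conjugacy of $\gamma(t_0)$ from the mere existence of the approximating minimizing geodesics $\sigma_n$; the mechanism is that $\exp_p$ identifies the two distinct tangent vectors $\ell_n\sigma_n'(0)$ and $t_n\gamma'(0)$ (distinct since $\ell_n = d(p,\gamma(t_n)) < t_n$), both of which limit to $t_0\gamma'(0)$, which is incompatible with $\exp_p$ being a local diffeomorphism there. The remaining steps — compactness to get the limiting direction, continuity of $\exp_p$ and $d$, and the corner-rounding and index-form arguments — are standard.
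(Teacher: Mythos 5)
Your proposal is correct and is exactly the classical argument (limiting minimizers from $t_n \downarrow t_0$ with the dichotomy $v \neq \gamma'(0)$ versus singularity of $\exp_p$ at $t_0\gamma'(0)$, and the corner-rounding/index-form arguments for the converse). The paper itself gives no proof of this proposition — it is stated with a citation to do Carmo — and your argument is precisely the one found in that reference, so there is nothing to reconcile.
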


We partition the cut locus into two disjoint subsets. We say that a cut point $q$ of $p$ is \emph{singular} if there exists a unique minimizing geodesic $\gamma$ joining $q$ to $p$. This can only occur if the cut points $p$ and $q$ are conjugate along $\gamma$. However being conjugate does not guarantee the existence of a unique minimal geodesic. We say that a cut point $q$ of $p$ is \emph{ordinary} if there exist multiple minimizing geodesics joining $q$ to $p$. Bishop \cite{Bis} showed that the set of ordinary cut points is dense in the cut locus. We are now ready to give a complete characterization of the differentiability of the distance function.

\begin{thm}\label{diff_cut_locus}Let $d_p \colon M \to \mathbb{R}$ denote the distance function from $p$. Then
\begin{description}
\item{a)} if $q \not\in C(p) \cup \{p\}$, we have that $d_p$ is smooth at $q$ and $\nabla d_p (q) = \dot{\sigma} (l)$ where $\sigma$ is the unique minimizing geodesic joining $p$ to $q$ and $l=d_p(q)$.
\item{b)} if $q \in C(p)$ is an ordinary cut point of $p$, we have that $d_p$ is not differentiable at $q$.
\item{c)} if $q \in C(p)$ is a singular cut point of $p$, we have that $d_p$ is differentiable at $q$ and $\nabla d_p (q) = \dot{\sigma} (l)$ where $\sigma$ is the unique minimizing geodesic joining $p$ to $q$ and $l=d_p(q)$.  However $d_p$ is not $C^1$ at its singular cut points.
\end{description}
\end{thm}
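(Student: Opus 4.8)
The plan is to treat the three cases separately, relying on the first variation formula for arclength and on the characterization of cut points recalled above. Part (a) is the classical statement; I would only sketch it: away from $C(p)\cup\{p\}$ there is a unique minimizing geodesic $\sigma$ from $p$ to $q$, this geodesic varies smoothly with $q$ by the inverse function theorem applied to $\exp_p$ (which is a local diffeomorphism here since $q$ is not conjugate to $p$), and differentiating $d_p = |\exp_p^{-1}(\cdot)|$ gives $\nabla d_p(q) = \dot\sigma(l)$. The substance of the theorem is in (b) and (c).

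For part (b), suppose $q$ is an ordinary cut point, so there are at least two distinct unit-speed minimizing geodesics $\sigma_1 \neq \sigma_2$ from $p$ to $q$, with distinct terminal velocities $\dot\sigma_1(l) \neq \dot\sigma_2(l)$. The key estimate is the standard upper bound coming from the first variation: for any unit vector $v \in T_qM$ and any minimizing geodesic $\sigma_j$ to $q$, one has $d_p(\exp_q(tv)) \le l - t\,\langle v, \dot\sigma_j(l)\rangle + o(t)$ as $t \to 0^+$, obtained by concatenating $\sigma_j$ with a short geodesic and comparing lengths. Hence the one-sided directional derivative of $d_p$ at $q$ in direction $v$ is at most $-\max_j \langle v, \dot\sigma_j(l)\rangle$. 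If $d_p$ were differentiable at $q$ with gradient $g$, we would need $\langle v, g\rangle \le -\max_j\langle v,\dot\sigma_j(l)\rangle$ for all $v$; taking $v = -\dot\sigma_1(l)$ forces $\langle \dot\sigma_1(l), g\rangle \ge 1$, and likewise $\langle \dot\sigma_2(l), g\rangle \ge 1$, while $|g| \le 1$ (Lipschitz constant of $d_p$). Since $\dot\sigma_1(l), \dot\sigma_2(l)$ are distinct unit vectors, strict convexity of the ball gives $\langle \tfrac{1}{2}(\dot\sigma_1(l)+\dot\sigma_2(l)), g\rangle \ge 1$ but $|\tfrac12(\dot\sigma_1(l)+\dot\sigma_2(l))| < 1$, contradicting $|g|\le 1$. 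So $d_p$ is not differentiable at $q$.

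For part (c), $q$ is a singular cut point: there is a unique minimizing geodesic $\sigma$ from $p$ to $q$, and $q$ is conjugate to $p$ along $\sigma$. Write $u = \dot\sigma(l) \in T_qM$. The upper estimate above gives $D_v d_p(q) \le -\langle v,u\rangle$ for every unit $v$; the heart of the argument is the matching lower bound $d_p(\exp_q(tv)) \ge l - t\langle v,u\rangle + o(t)$, equivalently $D_v d_p(q) \ge -\langle v,u\rangle$, which together yield differentiability with $\nabla d_p(q) = u$. I expect this lower bound to be the main obstacle, since it must use the uniqueness of the minimizer in an essential way. The idea is a compactness/limiting argument: if points $q_n \to q$ had $d_p(q_n) \le l - \langle q_n - q, u\rangle - \varepsilon|q_n-q|$ (in normal coordinates at $q$) along a subsequence, pick minimizing geodesics $\tau_n$ from $p$ to $q_n$; by Arzelà–Ascoli a subsequence converges to a minimizing geodesic from $p$ to $q$, which by uniqueness must be $\sigma$, so $\dot\tau_n$ converges to $u$ — but then a first–variation computation along $\tau_n$ contradicts the assumed deficit, because the ``defect'' in the linear estimate for $d_p$ against its minimizer's terminal velocity is $o$ of the displacement once the directions converge. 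Finally, to see $d_p$ is not $C^1$ at $q$: by Bishop's density theorem the ordinary cut points are dense in $C(p)$, so $q$ is a limit of points $q_n$ where, by part (b), $d_p$ fails to be differentiable; a function that is $C^1$ on a neighborhood cannot have non-differentiability points accumulating at $q$, so $d_p$ is not $C^1$ near $q$, hence not $C^1$ at $q$.
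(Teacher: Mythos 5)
Your overall strategy is sound, and for the substantive part (c) it is genuinely different from the paper's. The paper imports as a black box the formula $D^+_v d_p(q)=\min\{-\langle v,\xi\rangle:\xi\in\widehat{qp}\}$ for one-sided directional derivatives (its Lemma~\ref{direction_deriv}), which already gives the two-sided derivative in every \emph{fixed} direction when the minimizer is unique; the only remaining issue is uniformity over directions, which the paper handles by extracting a convergent subsequence of unit directions and bounding $d(\exp_q(t_iv_i),\exp_q(t_iv))$ by the length of a small circular arc in normal coordinates, namely $t_i|v_i-v|+O(t_i^2)=o(t_i)$. You instead re-derive both halves of the estimate from scratch: the upper bound by the first-variation/concatenation construction, and the lower bound by contradiction, Arzel\`a--Ascoli applied to minimizing geodesics $\tau_n$ from $p$ to $q_n$, uniqueness of $\sigma$ forcing $\dot\tau_n\to\dot\sigma$, and a first-variation estimate of $d_p(q)$ along $\tau_n$ whose error is $o(d(q_n,q))$ once the directions converge. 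That argument does close: the needed inequality is $d_p(q_n)\ge d_p(q)-\langle\exp_{q_n}^{-1}(q),\dot\tau_n(l_n)\rangle-o(d(q_n,q))$, and the inner product differs from the desired linear term by $d(q_n,q)\cdot|\dot\tau_n(l_n)-\dot\sigma(l)|=o(d(q_n,q))$; since you quantify over arbitrary sequences $q_n\to q$ rather than radial ones, uniformity over directions is automatic. Your route is more self-contained (it also reproves (b), which the paper simply cites to Sakai); the paper's is shorter given the cited lemma. Your ``not $C^1$'' argument is identical to the paper's (Bishop's density theorem plus part (b)).

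Two things to repair. First, a consistent sign slip: with $\sigma_j$ running from $p$ to $q$ and $\dot\sigma_j(l)$ its \emph{terminal} velocity, the concatenation bound reads $d_p(\exp_q(tv))\le l+t\langle v,\dot\sigma_j(l)\rangle+o(t)$, hence $D^+_vd_p(q)\le\min_j\langle v,\dot\sigma_j(l)\rangle$, not $-\max_j\langle v,\dot\sigma_j(l)\rangle$. As literally written, your choice $v=-\dot\sigma_1(l)$ yields only $\langle\dot\sigma_1(l),g\rangle\ge-\cos\theta$, not $\ge 1$. With the corrected sign the rest of (b) goes through (indeed more simply: applying the bound to both $v$ and $-v$ forces $\langle v,g\rangle$ to be squeezed between $\max_j$ and $\min_j$ of $\langle v,\dot\sigma_j(l)\rangle$ for every $v$, so all the $\dot\sigma_j(l)$ coincide). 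Second, the lower bound in (c) is the heart of the theorem and is only sketched; in a final write-up you must make the $o(d(q_n,q))$ error in the first-variation estimate along $\tau_n$ explicit (e.g.\ via the broken-path construction in normal coordinates, where the error is of order $d(q_n,q)^{3/2}$), since that is exactly the step where a careless argument would fail.
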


Sakai \cite{Sak} provides a detailed proof of parts (a) and (b) of Theorem~\ref{diff_cut_locus}. Ivanov \cite{Iva2} makes a statement similar to part (c) in a MathOverflow response. He does not provide a proof and it appears as though one does not exist in the literature. We provide one here for completeness. 

\begin{definition} \label{d_p_definition} The \emph{one-sided directional derivative} of the distance function at the point $q \in M$ in the direction $v \in T_qM$ is defined to be
\begin{equation*}\label{d_p_def} D^+_v d_p(q) = \lim_{t \to 0^+} \frac{d_p(\exp_q (t \cdot v)) - d_p(q)}{t}
\end{equation*}
provided that this limit exists. 
\end{definition}

\begin{lemma}[Cf.~\cite{Ito, Pla}]\label{direction_deriv} Let $\widehat{qp}$ denote the collection of initial velocity vectors in $T_qM$ of unit-speed minimizing geodesics joining $q$ to $p$. Then for a vector $v \in T_qM$ the one-sided derivative in the direction $v$ exists and is given by
\begin{equation*}\label{direc_deriv} D^+_v d_p (q) = \min{ \{ - \langle v,\xi  \rangle   :  \xi \in \widehat{qp}   \}. } 
\end{equation*}
In particular, if there exists a unique minimizing geodesic $\sigma$ joining $p$ to $q$ then the two-sided directional derivative exists and equals $\langle v, \dot{\sigma}(l) \rangle$ where $l=d_p(q)$.
\end{lemma}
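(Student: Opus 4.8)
The plan is to prove the two one-sided inequalities
$$\limsup_{t\to 0^+}\frac{d_p(\exp_q(tv))-d_p(q)}{t}\;\le\;\min\{-\langle v,\xi\rangle:\xi\in\widehat{qp}\}\quad\text{and}\quad \liminf_{t\to 0^+}\frac{d_p(\exp_q(tv))-d_p(q)}{t}\;\ge\;\min\{-\langle v,\xi\rangle:\xi\in\widehat{qp}\},$$
separately. Both difference quotients are bounded in absolute value by $|v|$ since $d_p$ is $1$-Lipschitz and $d(q,\exp_q(tv))=t|v|$ for small $t$, so the limsup and liminf are finite. The set $\widehat{qp}$ is nonempty by completeness, and it is a closed (hence compact) subset of the unit sphere in $T_qM$: a limit of initial velocities of unit-speed minimizing geodesics from $q$ to $p$ is again such a velocity, by continuous dependence of geodesics on initial conditions together with continuity of $d$ (the limiting geodesic still has length $d(q,p)$ and still joins $q$ to $p$). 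Consequently the minimum is attained, and the two inequalities together with $\liminf\le\limsup$ force the limit to exist with the stated value.

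For the upper bound I would fix $\xi\in\widehat{qp}$ and let $\sigma\colon[0,l]\to M$ be the unit-speed minimizing geodesic with $\sigma(0)=q$, $\sigma(l)=p$, $\dot\sigma(0)=\xi$, where $l=d_p(q)$. Writing $P_s$ for parallel transport along $\sigma$ from $q$ to $\sigma(s)$, consider the variation
$$V(t,s)=\exp_{\sigma(s)}\!\Big(\tfrac{l-s}{l}\,P_s(tv)\Big),\qquad s\in[0,l],$$
whose endpoint curves are $V(t,0)=\exp_q(tv)$ and $V(t,l)=p$, and which reduces to $\sigma$ at $t=0$. Its variation field is $J(s)=\partial_tV(0,s)=\tfrac{l-s}{l}P_s(v)$, with $J(0)=v$ and $J(l)=0$, so the first variation formula for arc length (the interior term vanishing because $\sigma$ is a geodesic) gives $\tfrac{d}{dt}\big|_0 L(V(t,\cdot))=\langle J(l),\dot\sigma(l)\rangle-\langle J(0),\dot\sigma(0)\rangle=-\langle v,\xi\rangle$. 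Since $V(t,\cdot)$ joins $\exp_q(tv)$ to $p$, we get $d_p(\exp_q(tv))\le L(V(t,\cdot))=l-\langle v,\xi\rangle\,t+o(t)$, hence $\limsup_{t\to 0^+}\frac{d_p(\exp_q(tv))-d_p(q)}{t}\le -\langle v,\xi\rangle$; taking the infimum over $\xi\in\widehat{qp}$ yields the upper bound.

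For the lower bound, let $L^-$ denote the liminf, choose $t_j\downarrow 0$ with $\frac{d_p(\exp_q(t_jv))-d_p(q)}{t_j}\to L^-$, and for each $j$ pick a unit-speed minimizing geodesic $\sigma_j\colon[0,l_j]\to M$ with $\sigma_j(0)=\exp_q(t_jv)$, $\sigma_j(l_j)=p$, $l_j=d_p(\exp_q(t_jv))$. Passing to a subsequence, $\dot\sigma_j(0)\to\xi_0$ for a unit vector $\xi_0\in T_qM$; since $l_j\to l$ and $\sigma_j\to\sigma_0$ uniformly on $[0,l]$ by continuous dependence on initial conditions, $\sigma_0$ is a unit-speed minimizing geodesic from $q$ to $p$, so $\xi_0\in\widehat{qp}$. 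Fix a small $\epsilon>0$ below the injectivity radius at $q$; for large $j$ the triangle inequality through the intermediate point $\sigma_j(\epsilon)$ gives
$$d_p(q)\le d\big(q,\sigma_j(\epsilon)\big)+d\big(\sigma_j(\epsilon),p\big)=d\big(q,\sigma_j(\epsilon)\big)+(l_j-\epsilon),$$
so $l_j-l\ge \epsilon-d(q,\sigma_j(\epsilon))$. Writing $\sigma_j(\epsilon)=\exp_{\exp_q(t_jv)}\!\big(\epsilon\,\dot\sigma_j(0)\big)$ and expanding in normal coordinates at $q$ one obtains $d(q,\sigma_j(\epsilon))=\epsilon+\langle v,\xi_0\rangle\,t_j+o(t_j)+O(\epsilon^{-1}t_j^2)$ as $j\to\infty$ with $\epsilon$ fixed; dividing by $t_j$ and letting $j\to\infty$ gives $L^-\ge -\langle v,\xi_0\rangle\ge \min\{-\langle v,\xi\rangle:\xi\in\widehat{qp}\}$.

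The main obstacle is the last estimate of $d(q,\sigma_j(\epsilon))$: one must control the second-order deviation of the exponential map from the identity uniformly so that, for fixed $\epsilon$, the error is genuinely $o(t_j)$ as $j\to\infty$, while keeping the $O(\epsilon^{-1}t_j^2)$ dependence explicit enough that the limit in $j$ can be taken before any limit in $\epsilon$. The remaining ``in particular'' assertion is then immediate: if the minimizing geodesic $\sigma$ from $p$ to $q$ is unique, then $\widehat{qp}=\{-\dot\sigma(l)\}$, so $D^+_vd_p(q)=\langle v,\dot\sigma(l)\rangle$ and likewise $D^+_{-v}d_p(q)=-\langle v,\dot\sigma(l)\rangle=-D^+_vd_p(q)$, whence the two-sided directional derivative exists and equals $\langle v,\dot\sigma(l)\rangle$.
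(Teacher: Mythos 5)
The paper itself gives no proof of this lemma --- it is imported from the literature (the ``Cf.'' citation) and used as a black box in the proof of Theorem~\ref{diff_cut_locus}(c) --- so there is no in-paper argument to compare against. Your proof is the standard one: first variation of arc length along each minimizing geodesic from $q$ to $p$ for the upper bound on the $\limsup$, and compactness of the family of minimizing segments plus a triangle inequality through an intermediate point $\sigma_j(\epsilon)$ for the lower bound on the $\liminf$. The architecture is sound, including the compactness of $\widehat{qp}$ needed to replace $\inf$ by $\min$, and the deduction of the two-sided statement from $\widehat{qp}=\{-\dot\sigma(l)\}$ in the unique-geodesic case.

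The step you flag, the expansion $d(q,\sigma_j(\epsilon))=\epsilon+\langle v,\xi_0\rangle t_j+o(t_j)$ for fixed $\epsilon$, is genuinely the crux, and the route you name (``expanding in normal coordinates at $q$'') is more delicate than it looks: normal coordinates only pin down the metric to $O(r^2)$, so a Euclidean law-of-cosines computation at scale $\epsilon$ carries distance errors of order $\epsilon^3$, which are \emph{not} $o(t_j)$ for fixed $\epsilon$ and would swamp the linear term you need. Two facts rescue the estimate. First, the leading term is exact: $d(q,\exp_q(\epsilon\xi))=\epsilon$ for \emph{every} unit $\xi$, so the function $F(x,\xi)=d\bigl(q,\exp_x(\epsilon\xi)\bigr)$ is constant in $\xi$ at $x=q$ and hence its $\xi$-derivative there vanishes --- this is essential because you have no rate for $\dot\sigma_j(0)\to\xi_0$ relative to $t_j$, and otherwise the term linear in $\xi_j-\xi_0$ could dominate. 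Second, $\sigma_0(\epsilon)$ is an interior point of a minimizing segment from $q$, hence not in $C(q)$, so $F$ is smooth near $(q,\xi_0)$ and its $x$-derivative in the direction $v$ equals $\langle J(\epsilon),\dot\sigma_0(\epsilon)\rangle=\langle v,\xi_0\rangle$, the (affine) tangential component of the Jacobi field with $J(0)=v$, $J'(0)=0$. Alternatively, you can avoid the intermediate point entirely by applying your own first-variation upper bound to $\sigma_j$ at the point $q_j=\exp_q(t_jv)$ in the direction pointing back toward $q$, noting that the quadratic error constant in that bound can be taken uniform in $j$ since all the $\sigma_j$ lie in a fixed compact set; dividing by $t_j$ then gives $\tfrac{l_j-l}{t_j}\ge-\langle P_{t_j}v,\dot\sigma_j(0)\rangle-Ct_j\to-\langle v,\xi_0\rangle$ directly. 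With either repair the argument is complete and correct.
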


\begin{proof}[Proof of Theorem~\ref{diff_cut_locus}(c).]  

Let $q$ be a singular cut point of $p$. Let $\sigma$ be the unique minimizing geodesic joining $p$ to $q$ and $l=d_p(q)$. It suffices to show that for every sequence $h_i \in T_qM$ with $|h_i| \to 0$, there exists a subsequence $k_i$ such that 
\begin{equation} \label{lim}
\lim_{i \to \infty} \frac{d_p(\exp_q(k_i)) - d_p(q) - \langle \dot{\sigma}(l) , k_i \rangle}{|k_i|} =0.
\end{equation}
Write $h_i = t_i u_i$ for $u_i \in U_qM$ the unit tangent sphere and $t_i \to 0$. By compactness of the unit sphere we can choose a subsequence $v_i$ of the $u_i$ which converge to some $v \in U_qM$. Setting $k_i = t_i v_i$ we will show that the limit in (\ref{lim}) holds.

From Lemma~\ref{direction_deriv} we have that $$\lim_{i \to \infty} \frac{d_p(\exp_q(t_i v)) - d_p(q) - \langle \dot{\sigma}(l) , t_i v \rangle}{t_i} =0.$$ 
Letting $\gamma_i=\exp_q(t_i v)$ and $c_i = \exp_q(t_i v_i)$ and applying the triangle inequality twice we have $$d_p(\gamma_i)-d(c_i,\gamma_i) \leq d_p(c_i) \leq d_p(\gamma_i)+d(c_i,\gamma_i). $$ 
We note that $\lim_{i \to \infty} \frac {\langle \dot{\sigma}(l) , t_i v_i \rangle}{t_i} = \lim_{i \to \infty} \frac {\langle \dot{\sigma}(l) , t_i v \rangle}{t_i}$ and $\lim_{i \to \infty} \frac{|t_i v - t_i v_i |}{t_i}=0$ so that the limit in (\ref{lim}) will hold if we show that $$\lim_{i \to \infty} \frac{d(c_i , \gamma_i )}{t_i} \leq \lim_{i \to \infty} \frac{|t_i v - t_i v_i | + O(t_i^2)}{t_i} =0.$$

Let $\tilde{\sigma}_{t_i}(s) \colon [0,1] \to T_qM$ be the constant speed path in $T_qM$ which traverses the arc of the circle of radius $t_i$ between $t_i v$ and $t_i v_i$ and $\sigma_{t_i}(s) \colon [0,1] \to M$ the curve in $M$ given by $\sigma_{t_i}(s)= \exp_q\,\tilde{\sigma}_{t_i}(s)$ so that $\sigma_{t_i}(0)=\gamma_i$ and $\sigma_{t_i}(1)=c_i$. We choose $i$ large enough that we can work in normal coordinates around $q$ where we know that $g_{ij} (\sigma_{t_i}(s))= \delta_{ij} + O(t_i^2)$ so that $|\dot{\sigma}_{t_i}(s)|^2= |\dot{\tilde{\sigma}}_{t_i}(s)|^2+O(t_i^2).$ Now computing the length of $\sigma_{t_i}(s)$ we have
\begin{align*}
l(\sigma_{t_i}(s) ) &= \int_0^1 |\dot{\sigma}_{t_i}(s)| \, ds = \int_0^1 \sqrt{  |\dot{\tilde{\sigma}}_{t_i}(s)|^2 + O(t_i^2)}\, ds =\int_0^1  |\dot{\tilde{\sigma}}_{t_i}(s)| + O(t_i^2) \, ds
\\ &= l(\tilde{\sigma}_{t_i}(s) ) + O(t_i^2) =   |t_i v - t_i v_i | +O(t_i^2)
\end{align*}
Finally we note that $d(c_i, \gamma_i ) \leq l(\sigma_{t_i}(s)) = |t_i v - t_i v_i |   +O(t_i^2) $ so that indeed the limit in (\ref{lim}) holds and we have shown that $d_p$ is differentiable at $q$ with $\nabla d_p (q)=\dot{\sigma}(l)$.

It is left to show that $d_p$ is not $C^1$ at singular cut points. Bishop \cite{Bis} showed that the ordinary cut locus is dense in the cut locus. Therefore, there exists a sequence $\{q_n\}$ of ordinary cut points converging to the singular cut point $q$. We know from part (b) of the theorem that $\nabla d_p$ is not defined at $q_n$ and can therefore conclude that $d_p$ is not $C^1$ at the singular cut locus.
\end{proof}

%
%
%
%
%
\section{Critical Points of the Uniform Energy}\label{critical}

The uniform energy (Definition~\ref{ue}) is given as a sum of distance functions, hence its differentiability will depend on the differentiability of its component distance functions. A priori, it may be possible for $E \colon M^k \to \mathbb{R}$ to be differentiable at points $\bar{x} \in M^k$ for which the individual terms $d(x_i,x_{i+1})^2$ are not differentiable. In Lemma~\ref{no_ordinary} we show that this phenomenon does not occur at the critical points of the uniform energy (Definition~\ref{cp}). We apply this lemma in Theorem~\ref{sormani} to strengthen the standard result from Morse theory relating the critical points of the uniform energy and the derivatives of its component distance functions (Cf.~\cite{Mil}). In particular, we are able to remove the standard assumption that the distance between each pair $(x_i,x_{i+1})$ is less than the injectivity radius of $M$.

\begin{definition}\label{cp} Let $\bar{x} = (x_1, \ldots , x_k)  \in M^k $. We say $\bar{x}$ is a \emph{critical point} of the uniform energy if $E$ is differentiable at $\bar{x}$ and its gradient is zero, i.e.~$\nabla E(\bar{x})=\vec{0}$. 
\end{definition}

\begin{lemma}\label{no_ordinary} If $\bar{x} \in M^k$ is a critical point of the uniform energy then none of the pairs $(x_i, x_{i+1})$ are ordinary cut points, i.e.~all of the component distance functions are differentiable.
\end{lemma}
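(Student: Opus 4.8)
The plan is to argue by contradiction. Suppose that for some index $j$ the pair $(x_j,x_{j+1})$ consists of ordinary cut points. By the cyclic symmetry of the uniform energy we may relabel the coordinates so that $j=1$. The idea is to freeze all coordinates except the first: the inclusion $\iota\colon M\to M^k$, $x\mapsto (x,x_2,\dots,x_k)$, is a smooth embedding with $\iota(x_1)=\bar x$, so $f:=E\circ\iota$ is differentiable at $x_1$ whenever $E$ is differentiable at $\bar x$. Collecting the two terms of $E$ that involve the first coordinate, $f(x)=k\,d(x_k,x)^2+k\,d(x,x_2)^2+C$ for a constant $C$, and differentiability of $f$ at $x_1$ means that $D^+_v f(x_1)$ is a linear functional of $v\in T_{x_1}M$.

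Next I would compute $D^+_v f(x_1)$. Since $s\mapsto s^2$ is differentiable, the one-sided chain rule gives $D^+_v(d_q^2)(x_1)=2\,d(x_1,q)\,D^+_v d_q(x_1)$ for $q\in\{x_2,x_k\}$ (the relevant one-sided derivatives exist by Lemma~\ref{direction_deriv}). Together with Lemma~\ref{direction_deriv} this yields
$$\tfrac{1}{2k}\,D^+_v f(x_1)=a\,g_a(v)+b\,g_b(v),\qquad g_a(v)=\min_{\eta\in\widehat{x_1 x_k}}\!\big(-\langle v,\eta\rangle\big),\quad g_b(v)=\min_{\xi\in\widehat{x_1 x_2}}\!\big(-\langle v,\xi\rangle\big),$$
where $a=d(x_1,x_k)\ge 0$ and $b=d(x_1,x_2)>0$; the strict inequality holds because $x_2\in C(x_1)$ forces $x_2\ne x_1$.

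The crux is the following convexity argument. Each of $a\,g_a$ and $b\,g_b$ is a minimum of linear functionals, hence concave and positively homogeneous of degree one, while their sum equals the linear functional $\tfrac{1}{2k}df_{x_1}$. Therefore $b\,g_b=\tfrac{1}{2k}df_{x_1}-a\,g_a$ is the sum of a linear functional and a convex function, hence convex; being concave as well, it is affine, and being positively homogeneous it is in fact linear. As $b>0$, $g_b$ itself is linear, say $g_b(v)=\langle v,w\rangle$ for a fixed $w\in T_{x_1}M$ and all $v$. Evaluating at $v$ and at $-v$ forces $\langle v,\xi\rangle=-\langle v,w\rangle$ for every $\xi\in\widehat{x_1 x_2}$ and every $v$, so $\widehat{x_1 x_2}$ is a single vector. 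But an ordinary cut point admits at least two distinct minimizing geodesics from $x_1$ to $x_2$, and distinct minimizing geodesics have distinct initial velocities, so $\widehat{x_1 x_2}$ contains at least two elements — a contradiction. Hence no pair $(x_i,x_{i+1})$ is an ordinary cut point, and Theorem~\ref{diff_cut_locus} then gives the differentiability of each component distance function.

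I expect the main obstacle to be precisely this cancellation step: a priori the non-differentiability of the $x_k$-term could cancel that of the $x_2$-term, and the observation that a sum of two concave, positively homogeneous functions can be linear only if each summand is already linear is what excludes this. Minor points to handle carefully are the reduction from differentiability of $E$ on $M^k$ to differentiability of the one-variable restriction $f$, and the validity of the one-sided chain rule for $d_q^2$ even when $d(x_1,q)$ is small; both are routine.
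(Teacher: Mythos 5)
Your proof is correct, but it takes a genuinely different route from the paper's. The paper uses the full strength of criticality: since $\nabla E(\bar x)=\vec 0$, every directional derivative $D_{\bar v_i}E(\bar x)$ must vanish, and if some $\widehat{x_ix_{i-1}}$ contained two distinct vectors $\xi_1\neq\xi_2$, then for any $\eta\in\widehat{x_ix_{i+1}}$ one of them, say $\xi_1$, satisfies $\xi_1\neq-\eta$, whence a direction $v_i$ (e.g.\ $v_i=\xi_1+\eta$) makes both minima strictly negative and forces $D_{\bar v_i}E(\bar x)<0$, a contradiction. You instead use only the \emph{differentiability} of $E$ at $\bar x$: the one-variable restriction has a linear differential, and your observation that a sum of two concave, positively homogeneous functions can be linear only if each summand is itself linear rules out any cancellation between the two non-smooth terms, forcing $\widehat{x_1x_2}$ to be a singleton. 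This convexity argument is slightly less elementary than the paper's explicit choice of a bad direction, but it buys a strictly stronger conclusion: mere differentiability of $E$ at $\bar x$ (with no condition on the gradient) already excludes ordinary cut point pairs, which is exactly the ``a priori possible cancellation'' phenomenon the paper flags before the lemma. All the supporting steps you mark as routine (restriction to one factor, the one-sided chain rule for $d_q^2$, positivity of $d(x_1,x_2)$ for a cut point pair, and the symmetry of the ordinary cut point relation) are indeed routine and correct.
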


\begin{proof} We know that $\nabla E(\bar{x})=\vec{0}$ because $\bar{x}$ is a critical point. Letting $\bar{v}_i=(0, \ldots, v_i, \ldots, 0) \in T_{\bar{x}}M^k$ we have that $D_{\bar{v}_i} E (\bar{x}) =  \langle \bar{v}_i, \nabla E (\bar{x})  \rangle =0$ for every $\bar{v}_i \in T_{\bar{x}}M^k$. We show how to calculate $D_{\bar{v}_i} E (\bar{x})$ using the one-sided directional derivative of the distance function from Lemma~\ref{direction_deriv}. We have
\begin{align*}
D_{\bar{v}_i} E (\bar{x}) &=  k \, D^+_{v_i} (d_{x_{i-1}}^2 + d_{x_{i+1}}^2 )x_i \\
&=  2k \, d(x_{i-1},x_i) \, D^+_{v_i} (d_{x_{i-1}} )x_i + 2k \, d(x_i , x_{i+1}) \, D^+_{v_i} (d_{x_{i+1}} )x_i \\
&= c_1 \, \min{ \{ - \langle v_i,\xi  \rangle   :  \xi \in \widehat{x_i x_{i-1}} } \} + c_2 \, \min{ \{ - \langle v_i,\eta  \rangle   :  \eta \in \widehat{x_i x_{i+1}} } \}
\end{align*}
where $c_1=2k \, d(x_{i-1},x_i) $ and $c_2= 2k \, d(x_i, x_{i+1})$ are both positive constants.

We will show that the sets $\widehat{x_i x_{i-1}}$ and $\widehat{x_i x_{i+1}}$ each consist of a single vector so that by Theorem~\ref{diff_cut_locus} we can conclude that each of the component distance functions is differentiable. Assume by contradiction that one (or both) of the sets contains multiple vectors; say there exist $\xi_1$ and $\xi_2 \in \widehat{x_i x_{i-1}}$ with $\xi_1 \neq \xi_2$. Then for any $\eta \in \widehat{x_i x_{i+1}}$ we must have that either $\xi_1 \neq - \eta$ or  $\xi_2 \neq - \eta$ because $\xi_1 \neq \xi_2$. Without loss of generality assume that $\xi_1 \neq - \eta$. Then there must exist  $v_i \in T_{x_i}M$ such that $- \langle v_i, \xi_1  \rangle < 0$ and $- \langle v_i, \eta \rangle < 0$. This yields $D_{\bar{v}_i} E(\bar{x}) < 0$ which is a contradiction when $\bar{x}$ is a critical point. We therefore conclude that none of the pairs $(x_i, x_{i+1})$ are ordinary cut points and that all of the component distance functions are differentiable.
\end{proof}

\begin{thm}\label{sormani} We have that $\bar{x}$ is a critical point of the uniform energy if and only if $ d(x_{i-1},x_i) = d(x_i, x_{i+1})$ and $\nabla d_{x_{i-1}} x_i = -\nabla d_{x_{i+1}} x_i$ for every $i \in \{1, \ldots, k \}$.
\end{thm}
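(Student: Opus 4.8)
The plan is to prove the two implications separately, drawing on Lemma~\ref{no_ordinary} for the forward direction and on Theorem~\ref{diff_cut_locus} together with a first-variation argument for the converse.

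For the forward implication, suppose $\bar{x}$ is a critical point of the uniform energy. A short preliminary step --- running the descent-direction argument in the proof of Lemma~\ref{no_ordinary} on any coordinate at which two consecutive $x_i$ coincide --- shows that either all the $x_i$ are equal (the trivial critical point $(p,\dots,p)$, which has zero energy and which we exclude), or every consecutive distance $d(x_{i-1},x_i)$ is positive. Assume the latter. By Lemma~\ref{no_ordinary} none of the pairs $(x_i,x_{i+1})$ are ordinary cut points, so by Theorem~\ref{diff_cut_locus} there is a unique minimizing geodesic between each consecutive pair, and each of $\widehat{x_i x_{i-1}}$, $\widehat{x_i x_{i+1}}$ is a singleton, namely $\{-\nabla d_{x_{i-1}} x_i\}$ and $\{-\nabla d_{x_{i+1}} x_i\}$. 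Substituting this into the formula for $D_{\bar{v}_i} E(\bar{x})$ derived in the proof of Lemma~\ref{no_ordinary} and using $\nabla E(\bar{x}) = \vec{0}$ gives $\langle v_i,\; 2k\,d(x_{i-1},x_i)\nabla d_{x_{i-1}} x_i + 2k\,d(x_i,x_{i+1})\nabla d_{x_{i+1}} x_i\rangle = 0$ for all $v_i \in T_{x_i}M$, so the bracketed vector vanishes. Since $\nabla d_{x_{i-1}} x_i$ and $\nabla d_{x_{i+1}} x_i$ are unit vectors and the two coefficients are positive, comparing norms forces $d(x_{i-1},x_i) = d(x_i,x_{i+1})$, and then $\nabla d_{x_{i-1}} x_i = -\nabla d_{x_{i+1}} x_i$.

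For the converse, assume the two stated conditions hold for every $i$. In particular each $\nabla d_{x_{i-1}} x_i$ exists, so by Theorem~\ref{diff_cut_locus} the point $x_i$ is not an ordinary cut point of $x_{i-1}$ and $x_i \neq x_{i-1}$; hence between every consecutive pair there is a unique unit-speed minimizing geodesic. I would first show $E$ is differentiable at $\bar{x}$: since $E$ is the sum over $i$ of the functions $(x,y) \mapsto k\, d(x,y)^2$ evaluated at $(x_i, x_{i+1})$, it is enough to show each such function is differentiable at $(x_i, x_{i+1})$. The distance function is locally Lipschitz, and because there is a unique minimizing geodesic $\sigma_i$ joining $x_i$ to $x_{i+1}$, the one-sided directional derivative of $d$ at $(x_i, x_{i+1})$ in a direction $(A,B)$ exists and equals the linear functional $\langle A, \nabla d_{x_{i+1}} x_i\rangle + \langle B, \nabla d_{x_i} x_{i+1}\rangle$, the first variation of arc length: the $\le$ bound follows by computing the length of the explicit variation $s \mapsto \exp_{\sigma_i(s)}(t\,V(s))$ with $V$ a vector field along $\sigma_i$ interpolating $A$ and $B$ by parallel transport, and the $\ge$ bound from an Arzel\`a--Ascoli argument showing that any minimizing geodesics between the perturbed endpoints must converge to $\sigma_i$. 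The equi-Lipschitz plus compactness argument already used in the proof of Theorem~\ref{diff_cut_locus}(c) then upgrades the existence of these linear directional derivatives to Fr\'echet differentiability of each summand, hence of $E$. Once $E$ is known differentiable at $\bar{x}$, the $i$-th block of $\nabla E(\bar{x})$ is computed exactly as above, namely $2k\,d(x_{i-1},x_i)\nabla d_{x_{i-1}} x_i + 2k\,d(x_i,x_{i+1})\nabla d_{x_{i+1}} x_i$, which vanishes by the two hypotheses; thus $\nabla E(\bar{x}) = \vec{0}$ and $\bar{x}$ is a critical point.

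The algebra with unit vectors and the bookkeeping of which coordinates each summand involves are routine. The one genuinely delicate point is the joint differentiability of $d(x,y)^2$ at a pair joined by a unique minimizing geodesic which may nevertheless be a conjugate pair: the separate differentiability of $d_{x_i}$ at $x_{i+1}$ and of $d_{x_{i+1}}$ at $x_i$ supplied by Theorem~\ref{diff_cut_locus}(c) is not by itself enough, and establishing the joint first-variation formula --- in particular its lower bound, where uniqueness of the minimizer is essential --- is the heart of the converse. If one prefers, this joint directional-derivative formula, generalizing Lemma~\ref{direction_deriv}, can be quoted from the literature, after which the converse is immediate.
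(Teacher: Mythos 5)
Your proof follows essentially the same route as the paper: Lemma~\ref{no_ordinary} to rule out ordinary cut points, the chain-rule formula for the blocks of $\nabla E(\bar{x})$, and then the unit-norm comparison to extract the two conditions; the converse reverses the computation. The one place you go beyond the paper is worth noting: the paper's converse is a single sentence asserting that the hypotheses force $\nabla E(\bar{x})=\vec{0}$ via the chain-rule formula, which tacitly presumes that $E$ is differentiable at $\bar{x}$ in the sense of Definition~\ref{cp}. Theorem~\ref{diff_cut_locus}(c) only gives differentiability of each $d_{x_{i\pm1}}$ at $x_i$ separately (one endpoint frozen), and you are right that separate differentiability of the partials does not formally yield joint differentiability of $(x,y)\mapsto d(x,y)^2$ at a conjugate pair joined by a unique minimizer; your first-variation argument (upper bound from an explicit variation, lower bound from uniqueness of the minimizer via Arzel\`a--Ascoli, then the equi-Lipschitz upgrade as in the proof of Theorem~\ref{diff_cut_locus}(c)) is exactly what is needed to close that gap, and your treatment of the degenerate case where consecutive points coincide is likewise a detail the paper elides. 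So: correct, same decomposition as the paper, but with the converse made rigorous at the one point where the paper's argument is only implicit.
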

\begin{proof} Let $\bar{x}$ be a critical point so that by Lemma~\ref{no_ordinary} none of the pairs $(x_i, x_{i+1})$ are ordinary cut points and $\nabla d_{x_{i \pm 1}} x_i $ is well defined for every $i$. Computing via the chain rule we have 
\begin{align}\label{chain}
\nabla E(\bar{x}) = 2k \, &[d_{x_k}x_1 \, \nabla d_{x_k}x_1 +d_{x_2}x_1 \, \nabla d_{x_2}x_1 , \, d_{x_1}x_2 \, \nabla d_{x_1}x_2 +d_{x_3}x_2 \, \nabla d_{x_3}x_2, \notag  \\
&  \ldots ,d_{x_{k-1}}x_k \, \nabla d_{x_{k-1}}x_k +d_{x_1}x_k \, \nabla d_{x_1}x_k ]
\end{align}
so that the critical point condition $\nabla E(\bar{x}) = \vec{0}$ implies that $ d_{x_{i-1}}x_i \, \nabla d_{x_{i-1}} x_i = -d_{x_{i+1}}x_i \, \nabla d_{x_{i+1}} x_i$. We note that $|\nabla d_{x_{i \pm 1}} x_i |=1$ and therefore conclude that $ d(x_{i-1},x_i) = d(x_i, x_{i+1})$ and $\nabla d_{x_{i-1}} x_i = -\nabla d_{x_{i+1}} x_i$ for every $i \in \{1, \ldots, k \}$.

In the opposite direction assume $ d(x_{i-1},x_i) = d(x_i, x_{i+1})$ and $\nabla d_{x_{i-1}} x_i = -\nabla d_{x_{i+1}} x_i$ for every $i$. Then by (\ref{chain}) we have $\nabla E(\bar{x}) = \vec{0}$ so that indeed $\bar{x}$ is a critical point of the uniform energy.
\end{proof}

%
%
%
%
\section{1/k-Geodesics and Balanced Points of the Uniform Energy}\label{geo}

This section introduces the balanced points of the uniform energy and extends the Sormani result \cite[Thm 10.2]{Sor} to provide a one-to-one correspondence between the classes of balanced points of the uniform energy and all 1/k-geodesics on a Riemannian manifold. We begin with an exposition of Sormani's ideas.

\begin{definition}[\cite{Sor}, Definition 3.1]\label{kk} A \emph{1/k-geodesic} is a closed geodesic $\gamma \colon S^1 \to M$ which is minimizing on all subintervals of length $l(\gamma)/k$, i.e. $$d(\gamma(t),\gamma(t+2\pi/k))=l(\gamma)/k \hspace{4mm} \forall t \in S^1.$$ The 1/k-geodesics come in two disjoint flavors:
\begin{description}
\item{a)} A \emph{strict} 1/k-geodesic is a 1/k-geodesic that contains a pair of cut points at distance $l(\gamma)/k$.
\item{b)} An \emph{openly} 1/k-geodesic does not contain cut points at distance $l(\gamma)/k$, and therefore will always minimize on some open neighborhood of subintervals of length $l(\gamma)/k$.
\end{description}
\end{definition}

\begin{prop}[\cite{Sor}, Theorem 3.1]\label{all} Any closed geodesic is a 1/k-geodesic for a sufficiently large number k. 
\end{prop}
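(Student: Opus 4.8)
The plan is to convert the \emph{local} minimality of a geodesic into the \emph{uniform} minimality required by Definition~\ref{kk}, using that the image of a closed geodesic is compact. The idea is to produce a single length scale $\varepsilon_0 > 0$ so that every subarc of $\gamma$ of length less than $\varepsilon_0$ is automatically minimizing, and then to pick $k$ large enough that $l(\gamma)/k < \varepsilon_0$.

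First I would recall the standard fact (Cf.~\cite{Sak}) that the injectivity radius $p \mapsto \mathrm{inj}_M(p)$ is a continuous, strictly positive function on $M$. Since $\gamma(S^1)$ is the continuous image of the compact set $S^1$, the quantity $\varepsilon_0 := \min_{t \in S^1} \mathrm{inj}_M(\gamma(t))$ is attained and is strictly positive. Next I would use the characterization of $\mathrm{inj}_M(p)$ as the distance from $p$ to its cut locus: a geodesic segment issuing from $p$ of length strictly less than $\mathrm{inj}_M(p)$ lies strictly before its cut point, hence is the \emph{unique} minimizing geodesic between its endpoints. Now choose $k \in \mathbb{N}$ with $k > l(\gamma)/\varepsilon_0$, so that $l(\gamma)/k < \varepsilon_0$. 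For every $t \in S^1$ the restriction $\gamma|_{[t,\,t+2\pi/k]}$ is a geodesic segment based at $\gamma(t)$ of length $l(\gamma)/k < \varepsilon_0 \le \mathrm{inj}_M(\gamma(t))$, hence minimizing; that is, $d(\gamma(t),\gamma(t+2\pi/k)) = l(\gamma)/k$. By Definition~\ref{kk} this is exactly the statement that $\gamma$ is a 1/k-geodesic, and the same argument works for every larger $k$ as well, which is what ``sufficiently large'' should mean.

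There is no serious obstacle here; the only step that genuinely requires care is the appeal to the uniform lower bound $\varepsilon_0 > 0$. This rests on the continuity of the injectivity radius (equivalently, continuity of the cut-distance function on the unit tangent bundle), which is standard but not elementary, combined with compactness of $\gamma(S^1)$. An alternative route that avoids the injectivity radius is to cover $\gamma(S^1)$ by totally normal neighborhoods, pull this back to an open cover of $S^1$, extract a Lebesgue number $\delta > 0$, and take $k$ with $l(\gamma)/k < \delta$: any parameter subinterval of length less than $\delta$ then maps into a single totally normal neighborhood, so $\gamma$ restricted to it is a subarc of a minimizing geodesic and hence minimizing. Either formulation yields the claim.
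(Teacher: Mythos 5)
Your proof is correct and is essentially the paper's argument spelled out in full: the paper's one-line proof invokes exactly the compactness of the circle and the local minimizing property of geodesics, which you implement via a uniform positive lower bound on the injectivity radius along $\gamma(S^1)$ (or the equivalent Lebesgue-number argument with totally normal neighborhoods). No further comment is needed.
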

\begin{proof} This follows from compactness of the circle and the local minimizing property of geodesics.
\end{proof}

\begin{definition}[\cite{Sor}, Theorem 10.1] Let $\bar{x} \in M^k$ be a critical point of the uniform energy as in Definition~\ref{cp} (i.e.~$\nabla E(\bar{x})=\vec{0}$). We say $\bar{x}$ is a \emph{smooth} critical point of the uniform energy if none of the pairs $(x_i, x_{i+1})$ are (singular) cut points, i.e.~the component distance functions are all smooth functions.
\end{definition}

\begin{lemma}[\cite{Sor}, Lemma 10.2] Let $\bar{x} = (x_1, \ldots , x_k)  \in M^k $ be a smooth critical point of the uniform energy. Then there exists a unique associated closed geodesic $\gamma \colon S^1 \to M$ with $\gamma(2\pi i /k)=x_i$. 
\end{lemma}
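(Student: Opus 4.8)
The plan is to assemble $\gamma$ from the minimizing geodesics between consecutive $x_i$ and to invoke Theorem~\ref{sormani} to check that these arcs fit together without corners. Since $\bar{x}$ is a \emph{smooth} critical point, no pair $(x_i,x_{i+1})$ is a cut pair, so Theorem~\ref{diff_cut_locus}(a) gives, for each $i$ (indices taken modulo $k$), a unique unit-speed minimizing geodesic $\sigma_i \colon [0,L] \to M$ from $x_i$ to $x_{i+1}$; here $L := d(x_i,x_{i+1})$ is independent of $i$ by the first half of Theorem~\ref{sormani}. Applying Theorem~\ref{diff_cut_locus}(a) twice more, and using that the minimizing geodesic from $x_i$ to $x_{i-1}$ is the reversal $s \mapsto \sigma_{i-1}(L-s)$, I would record the two identities $\nabla d_{x_{i-1}}(x_i) = \dot{\sigma}_{i-1}(L)$ and $\nabla d_{x_{i+1}}(x_i) = -\dot{\sigma}_i(0)$.

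Next I would substitute these into the second half of Theorem~\ref{sormani}, namely $\nabla d_{x_{i-1}}(x_i) = -\nabla d_{x_{i+1}}(x_i)$, to obtain $\dot{\sigma}_{i-1}(L) = \dot{\sigma}_i(0)$ for every $i$. This says precisely that the terminal velocity of $\sigma_{i-1}$ at $x_i$ coincides with the initial velocity of $\sigma_i$ at $x_i$, so by uniqueness of geodesics with prescribed initial conditions the concatenation of $\sigma_1, \dots, \sigma_k$ (with $\sigma_k$ ending at $x_{k+1} = x_1$) is a single smooth geodesic that, moreover, closes up smoothly at $x_1$ by the $i=1$ case of the identity. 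Reparametrizing this closed curve at constant speed so that arc-length $iL$ corresponds to parameter $2\pi i/k$ produces a closed geodesic $\gamma \colon S^1 \to M$ with $\gamma(2\pi i/k) = x_i$ and $l(\gamma) = kL$.

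For uniqueness I would point out that the construction involved no choices: between a non-cut pair there is exactly one minimizing geodesic, so the arcs $\sigma_i$, and hence $\gamma$, are determined by $\bar{x}$. Spelled out, if $\tilde{\gamma}\colon S^1 \to M$ is any closed geodesic with $\tilde{\gamma}(2\pi i/k) = x_i$ whose restriction to each $[2\pi i/k, 2\pi(i+1)/k]$ is distance-minimizing, then that restriction is a minimizing geodesic of length $d(x_i,x_{i+1}) = L$ between the non-cut pair $(x_i,x_{i+1})$, hence equals $\sigma_i$ up to the reparametrization above; therefore $\tilde{\gamma} = \gamma$.

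I expect the one genuinely delicate point to be the sign and orientation bookkeeping in the first two paragraphs: one must keep straight that $\nabla d_p$ points \emph{away} from $p$, that reversing a minimizing geodesic interchanges which endpoint's gradient one is computing, and that consequently the balance condition $\nabla d_{x_{i-1}}(x_i) = -\nabla d_{x_{i+1}}(x_i)$ is exactly a ``no corner'' condition at $x_i$ rather than something degenerate like $x_{i-1}=x_{i+1}$. Once that translation is made carefully, everything else reduces to the standard fact that a piecewise geodesic with matching one-sided velocities at its breakpoints is a geodesic.
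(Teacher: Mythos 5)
The paper does not prove this lemma --- it is quoted verbatim from Sormani (her Lemma 10.2) and used as a black box --- so there is no in-paper argument to compare against; your proof is correct and is exactly the standard broken-geodesic argument that underlies the cited result. Your sign bookkeeping ($\nabla d_{x_{i-1}}(x_i)=\dot{\sigma}_{i-1}(L)$, $\nabla d_{x_{i+1}}(x_i)=-\dot{\sigma}_i(0)$, so the balance condition of Theorem~\ref{sormani} is precisely the no-corner condition) is right, and the uniqueness step correctly exploits that a smooth critical point has no cut pairs, so each arc of any associated geodesic is forced.
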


\begin{definition}[\cite{Sor}, Definition 10.2] A smooth critical point is called \emph{rotating} if the unique associated geodesic $\gamma \colon S^1 \to M$ is such that for every $t \in S^1$ the point $(\gamma(t), \gamma(t+2\pi/k), \ldots , \gamma(t + 2\pi(k-1)/k) ) \in M^k$ is also a smooth critical point of the uniform energy. In this case we call the set $\{(\gamma(t), \gamma(t+2\pi/k), \ldots , \gamma(t + 2\pi(k-1)/k)) \in M^k  :  t \in S^1 \}$ a \emph{smooth class of critical points} of the uniform energy in $M^k$. 
\end{definition}

A smooth critical point need not be a rotating smooth critical point. Indeed, for $\gamma$ the unique closed geodesic associated to the smooth critical point, there may be a $t_0 \in S^1$ such that $(\gamma(t_0), \gamma(t_0+2\pi/k), \ldots , \gamma(t_0 + 2\pi(k-1)/k) )$ is not a smooth critical point. An example where this occurs is the over-under geodesic on the doubled square \cite[Ex 9.2]{Sor}. Any four-tuple of evenly spaced points on this geodesic is a smooth critical point, except for the four-tuple of midpoints. Both the over and the under geodesics minimize between pairs of adjacent midpoints, hence these pairs are cut points. This geodesic is therefore a strict 1/4-geodesic.

\begin{thm}[\cite{Sor}, Theorem 10.2]\label{sormani3} Openly 1/k-geodesics in a compact Riemannian manifold have a one-to-one correspondence with smooth classes of critical points of the uniform energy in $M^k$ of nonzero energy.
\end{thm}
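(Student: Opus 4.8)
The plan is to exhibit the correspondence as a pair of mutually inverse assignments, using Theorem~\ref{sormani} to translate the critical-point condition into statements about geodesics and Theorem~\ref{diff_cut_locus}(a) to identify the gradients of the component distance functions with endpoint velocities of minimizing geodesics.

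\emph{From an openly 1/k-geodesic to a smooth class.} First I would take an openly 1/k-geodesic $\gamma$, parametrize it with constant speed and period $2\pi$, and for each $t\in S^1$ form $\bar x(t)=(\gamma(t),\gamma(t+2\pi/k),\dots,\gamma(t+2\pi(k-1)/k))$. The 1/k-geodesic property makes all consecutive distances equal to $l(\gamma)/k$, and ``openly'' means none of the consecutive pairs are cut points; so by Theorem~\ref{diff_cut_locus}(a) each component distance function is smooth at the relevant point, with $\nabla d_{x_{i-1}}(x_i)$ and $-\nabla d_{x_{i+1}}(x_i)$ both equal to the unit velocity of $\gamma$ at $x_i$. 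Theorem~\ref{sormani} then shows $\bar x(t)$ is a critical point, and smoothness of the components makes it a \emph{smooth} critical point; since this holds for every $t$, the geodesic $\gamma$ is rotating and $\{\bar x(t):t\in S^1\}$ is a smooth class, of energy $E(\bar x(t))=l(\gamma)^2\neq0$.

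\emph{From a smooth class to an openly 1/k-geodesic.} Conversely, given a smooth class of nonzero energy, I would pick any member $\bar x$ and let $\gamma\colon S^1\to M$ be its unique associated closed geodesic (Sormani \cite{Sor}), reparametrized at constant speed with $\gamma(2\pi i/k)=x_i$; by Theorem~\ref{sormani} the distances $d=d(x_i,x_{i+1})$ are all equal, and nonzero energy forces $d>0$. Theorem~\ref{diff_cut_locus}(a) identifies $\nabla d_{x_{i-1}}(x_i)$ and $-\nabla d_{x_{i+1}}(x_i)$ with the terminal velocity of the minimizing geodesic from $x_{i-1}$ to $x_i$ and the initial velocity of the minimizing geodesic from $x_i$ to $x_{i+1}$, respectively; the critical-point equality of these makes the minimizing arcs join $C^1$-ly, so they concatenate into $\gamma$ and each arc of $\gamma$ between consecutive marked points is minimizing of length $d=l(\gamma)/k$. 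To promote this to the full 1/k-geodesic property I would argue that $S=\{t\in S^1:\gamma|_{[t,\,t+2\pi/k]}\text{ is minimizing}\}$ is all of $S^1$: it is nonempty (it contains the $k$ marked parameters), closed by continuity of the distance function, and open because the rotating hypothesis forces $(\gamma(t),\gamma(t+2\pi/k))$ to be a non-cut-point for every $t$, so for $t\in S$ the arc $\gamma|_{[t,\,t+2\pi/k]}$ is the unique minimizing geodesic between its endpoints and lies strictly before the cut point along $\gamma$, hence extends to a minimizing arc on a slightly larger interval at either end; connectedness of $S^1$ then gives $S=S^1$. Thus $\gamma$ is a 1/k-geodesic, and since no pair $(\gamma(t),\gamma(t+2\pi/k))$ is a cut point it is openly so.

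\emph{Mutual inversion and the main obstacle.} To finish I would check the two assignments invert one another: a smooth critical point $\bar x$ recovers $\gamma$ uniquely (Sormani \cite{Sor}), so the class determines the openly 1/k-geodesic and the first assignment is injective, while feeding the $\gamma$ built in the second step back into the first returns the original class, since both classes contain $\bar x$ and a smooth class is determined by any one of its points. The hard part will be the openness step above --- extracting from the rotating hypothesis the promotion of minimality of the $k$ distinguished arcs to minimality of \emph{every} arc of length $l(\gamma)/k$ --- which is exactly where ``smooth critical point'' must be used as carrying the no-cut-point information that Theorem~\ref{diff_cut_locus} then converts into extendibility of minimizing arcs, and where completeness (from compactness, via Hopf--Rinow) is what guarantees that the minimizing geodesics invoked throughout exist.
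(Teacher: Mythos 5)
This theorem is not proved in the paper at all: it is quoted verbatim from Sormani \cite{Sor} (her Theorem 10.2), and the author relies on it as a black box when assembling Theorem~\ref{combined}. Your reconstruction is correct and follows the same route as Sormani's original argument: the forward direction is a direct application of Theorem~\ref{diff_cut_locus}(a) and Theorem~\ref{sormani}, and the delicate point in the reverse direction --- promoting minimality of the $k$ marked arcs to minimality of every arc of length $l(\gamma)/k$ --- is handled correctly by your open--closed--nonempty argument on $S\subset S^1$, where the rotating/smoothness hypothesis supplies exactly the no-cut-point information needed for openness (via the symmetry of the cut relation, so the minimizing arc extends at both ends). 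I see no gap.
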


Sormani then asks if her ideas can be extended to achieve a similar relationship between the strict 1/k-geodesics and some notion of non-smooth critical points of the uniform energy \cite[Remark 10.4]{Sor}. In order to extend her ideas it will be necessary to consider those points $\bar{x} \in M^k$ which contain pairs $(x_i, x_{i+1})$ of cut points. We note that Lemma~\ref{no_ordinary} concludes that the critical points of the uniform energy cannot contain pairs $(x_i, x_{i+1})$ of ordinary cut points. It is therefore necessary to consider a more general notion than critical point of the uniform energy. We introduce the balanced points of the uniform energy for this purpose. The remainder of this section is used to establish Theorem~\ref{combined} which gives a one-to-one correspondence between 1/k-geodesics and classes of balanced points of the uniform energy.

\begin{definition}\label{balanced} Let $\widehat{qp}$ denote the collection of initial velocity vectors (in $T_qM$) of unit-speed minimizing geodesics joining $q$ to $p$.  Let $\bar{x} = (x_1, \ldots , x_k)  \in M^k $
\begin{description}
\item{a)} we say $\bar{x}$ is a \emph{balanced point} of the uniform energy $E \colon M^k \to \mathbb{R}$ if $d(x_{i-1},x_i) = d(x_i, x_{i+1})$ and if there exists a vector $\bar{\xi} = (\xi_1, \ldots \xi_k) \in T_{\bar{x}}M^k $ with $\xi_i \in \widehat{x_i x_{i+1}}$ and $-\xi_i \in \widehat{x_i x_{i-1} }$ for every $i \in \{1, \ldots, k \}$.
\item{b)} such a balanced point is called \emph{uniquely balanced} if none of the pairs $(x_i, x_{i+1})$ are ordinary cut points, i.e. every pair is joined by a unique minimizing geodesic.
\item{c)} such a balanced point is called \emph{smooth} if none of the pairs $(x_i, x_{i+1})$ are cut points and is called \emph{non-smooth} if at least one of the pairs are cut points (ordinary or singular).
\end{description}
\end{definition}

\begin{remark}\normalfont It follows immediately from Theorem~\ref{sormani} that $\bar{x}$ is a critical point of the uniform energy if and only if it is a uniquely balanced point. Similarly, $\bar{x}$ is a smooth critical point of the uniform energy if and only if it is a smooth balanced point.
\end{remark}

\begin{definition} We say a closed geodesic $\gamma \colon S^1 \to M$ is \emph{associated} to the balanced point $\bar{x} \in M^k$ if there exists $t \in S^1$ with $\bar{x}=(\gamma(t), \gamma(t+2\pi/k), \ldots , \gamma(t + 2\pi(k-1)/k) ) \in M^k$ and $\dot{\gamma}(t+2\pi (i-1) /k) \in \widehat{x_i x_{i+1}}$ and $-\dot{\gamma}(t+2\pi (i-1) /k) \in \widehat{x_i x_{i-1}}$ for every $i \in \{1, \ldots, k \}$.
\end{definition} 

We note that a balanced point need not have an associated closed geodesic. An example of such a balanced point is given by the five-tuple of midpoints on the doubled pentagon \cite[Remark 10.4]{Sor}. 

\begin{definition} A balanced point is called \emph{rotating} if there exists an associated closed geodesic $\gamma \colon S^1 \to M$ such that for every $t \in S^1$ the point $(\gamma(t), \gamma(t+2\pi/k), \ldots , \gamma(t + 2\pi(k-1)/k) ) \in M^k$ is a balanced point with associated closed geodesic $\gamma$. In this case we call the set $\{(\gamma(t), \gamma(t+2\pi/k), \ldots , \gamma(t + 2\pi(k-1)/k)) \in M^k  :  t \in S^1 \}$ a \emph{class of balanced points} of the uniform energy in $M^k$. 
\end{definition}

An example of a balanced point with associated closed geodesic which is not rotating is given by the corners on the doubled square \cite[Ex 10.1]{Sor}. 

\begin{definition} A \emph{non-smooth class of balanced points} of the uniform energy in $M^k$ is a class of balanced points such that $(\gamma(t_0), \gamma(t_0 +2\pi/k), \ldots , \gamma(t_0 + 2\pi(k-1)/k) ) \in M^k$ is a non-smooth balanced point for some $t_0 \in S^1$.
\end{definition}

An individual non-smooth balanced point may be associated to multiple closed geodesics, and thus multiple non-smooth classes of balanced points. An example where this occurs is the four-tuple of midpoints on the doubled square which is associated to the two closed over-under geodesics \cite[Ex 9.2]{Sor}. The four-tuple $[(0,0),(.25,.5),(.5,1),(.75,.5)]$ on the standard flat two-torus provides another illustrative example.

\begin{lemma}\label{lem1} For every 1/k-geodesic $\gamma \colon S^1 \to M$ and every $t\in S^1$ the point $\bar{x}=(\gamma(t), \gamma(t+2\pi/k), \ldots , \gamma(t + 2\pi(k-1)/k) ) \in M^k$ is a rotating balanced point of the uniform energy. Therefore, to each 1/k-geodesic we can associate a unique class of balanced points in $M^k$.
\end{lemma}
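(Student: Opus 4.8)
The plan is to produce the required balanced structure on $\bar x$ explicitly from the velocity field of $\gamma$, and then to observe that the construction is uniform in the base point $t$, which is precisely what delivers both the rotating property and the well-definedness of the associated class.

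First I would set up notation: write $\bar x = (x_1,\dots,x_k)$ with $x_i = \gamma(t + 2\pi(i-1)/k)$, and for each $i$ let $v_i$ denote the unit vector in $T_{x_i}M$ pointing in the direction of $\dot\gamma\big(t + 2\pi(i-1)/k\big)$ (I pass to the unit direction because $\widehat{qp}$ is defined using \emph{unit-speed} minimizing geodesics, whereas $\gamma$ has constant speed $l(\gamma)/2\pi$). The equality $d(x_{i-1},x_i)=d(x_i,x_{i+1})$ is immediate, since a 1/k-geodesic satisfies $d(\gamma(s),\gamma(s+2\pi/k))=l(\gamma)/k$ for every $s$. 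For the existence of the witness vector, take $\bar\xi=(v_1,\dots,v_k)$. The sub-arc $\gamma|_{[\,s,\,s+2\pi/k\,]}$ is a geodesic segment of length $l(\gamma)/k = d(\gamma(s),\gamma(s+2\pi/k))$, hence a minimizing geodesic from $\gamma(s)$ to $\gamma(s+2\pi/k)$; applying this with $s = t+2\pi(i-1)/k$ shows $v_i \in \widehat{x_i x_{i+1}}$. Likewise, reversing the adjacent sub-arc $\gamma|_{[\,s-2\pi/k,\,s\,]}$ yields a minimizing geodesic from $\gamma(s)$ to $\gamma(s-2\pi/k)$ whose initial unit velocity at $\gamma(s)$ is $-v_i$, so $-v_i \in \widehat{x_i x_{i-1}}$. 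Thus $\bar x$ is a balanced point in the sense of Definition~\ref{balanced}, and the same computation shows that $\gamma$ is a closed geodesic associated to $\bar x$.

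Next I would dispatch the rotating property and the conclusion about the class. Nothing in the previous paragraph used the particular value of $t$: running the identical argument with an arbitrary $s\in S^1$ in place of $t$ shows that $(\gamma(s),\gamma(s+2\pi/k),\dots,\gamma(s+2\pi(k-1)/k))$ is a balanced point with associated closed geodesic $\gamma$. Hence $\bar x$ is rotating, and the set $\{(\gamma(s),\gamma(s+2\pi/k),\dots,\gamma(s+2\pi(k-1)/k)) : s\in S^1\}$ is a class of balanced points of the uniform energy. Since this set is manufactured from $\gamma$ alone — in particular it does not depend on the base point $t$ chosen to form $\bar x$ — it is the unique class of balanced points associated to the 1/k-geodesic $\gamma$, giving a well-defined assignment from 1/k-geodesics to classes of balanced points.

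There is no deep obstacle here; the content is bookkeeping. The two points that warrant care are: (i) the speed normalization, since $\widehat{qp}$ is phrased in terms of unit-speed geodesics while $\gamma$ is not unit speed, so one must consistently replace $\dot\gamma$ by $\dot\gamma/|\dot\gamma|$; and (ii) the justification that the relevant sub-arcs of $\gamma$ are \emph{globally} minimizing geodesics, which holds exactly because $\gamma$ is a 1/k-geodesic and would fail on longer sub-arcs. Once these are made explicit, the verification of Definition~\ref{balanced} and of the rotating condition is routine.
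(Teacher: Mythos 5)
Your proof is correct and follows the same route as the paper's: take $\xi_i$ to be the (unit) velocity of $\gamma$ at $x_i$, use the $1/k$-minimizing property to see the adjacent sub-arcs are minimizing geodesics, and note the argument is independent of the base point $t$. The paper's version is just terser, omitting the unit-speed normalization and the explicit verification that the sub-arcs minimize, which you rightly spell out.
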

\begin{proof} We have that $d(x_{i-1},x_i) = d(x_i, x_{i+1})$ because the geodesic $\gamma$ is parametrized by arc length. Then letting $\xi_i$ correspond to the velocity vector of $\gamma$ at $x_i$ we see that $\bar{x}$ is indeed a balanced point of the uniform energy. The rotating condition is satisfied because $\bar{x}$ was generated by the closed 1/k-geodesic $\gamma$.
\end{proof}

\begin{lemma}\label{strict_lemma} For every strict 1/k-geodesic there exists a $t_0 \in S^1$ such that $(\gamma(t_0), \gamma(t_0+2\pi/k), \ldots , \gamma(t_0 + 2\pi(k-1)/k) )$ is a non-smooth balanced point of the uniform energy. Therefore, to each strict 1/k-geodesic we can associate a unique non-smooth class of balanced points in $M^k$.
\end{lemma}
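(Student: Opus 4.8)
The plan is to read off what we need directly from the definition of strictness together with Lemma~\ref{lem1}. By Definition~\ref{kk}(a), a strict $1/k$-geodesic contains a pair of cut points at distance $l(\gamma)/k$: there is a parameter $s \in S^1$ such that $\gamma(s)$ and $\gamma(s+2\pi/k)$ are cut points of one another, their distance being $l(\gamma)/k = d(\gamma(s),\gamma(s+2\pi/k))$ since $\gamma$ is a $1/k$-geodesic. I would set $t_0 = s$ and consider the $k$-tuple $\bar{x} = (\gamma(t_0),\gamma(t_0+2\pi/k),\ldots,\gamma(t_0+2\pi(k-1)/k)) \in M^k$.

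First I would invoke Lemma~\ref{lem1}: it already gives that $\bar{x}$ is a rotating balanced point of the uniform energy, with associated closed geodesic $\gamma$, and that $\gamma$ determines a unique class of balanced points, namely the rotating family $\{(\gamma(t),\gamma(t+2\pi/k),\ldots,\gamma(t+2\pi(k-1)/k)) : t \in S^1\}$. The only remaining point is that $\bar{x}$ is \emph{non-smooth} in the sense of Definition~\ref{balanced}(c), i.e.\ that some consecutive pair $(x_i,x_{i+1})$ consists of cut points. But with the choice $t_0 = s$ we have $x_1 = \gamma(s)$ and $x_2 = \gamma(s+2\pi/k)$, which is exactly the cut point pair furnished by strictness; hence $\bar{x}$ is a non-smooth balanced point. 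Consequently the class attached to $\gamma$ contains a non-smooth balanced point (at $t = t_0$) and is therefore, by definition, a non-smooth class of balanced points — and it is the unique one associated to $\gamma$, again by Lemma~\ref{lem1}.

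This lemma is essentially immediate once the definitions are unwound, so there is no substantial obstacle; the only care required is bookkeeping. One must ensure the cut point pair guaranteed by strictness actually occupies a consecutive slot $(x_i, x_{i+1})$ of the $k$-tuple, which is arranged simply by choosing the base parameter $t_0$ to be the starting point of that pair, and one must remember that "non-smooth" for a class demands only a single bad $t_0$ rather than every rotation being non-smooth. I would also remark, for continuity with the surrounding discussion, that the non-smooth class so obtained may be shared with other closed geodesics, which is why the association is from strict $1/k$-geodesics to classes and not conversely.
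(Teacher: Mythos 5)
Your proposal is correct and follows the same route as the paper: invoke Lemma~\ref{lem1} to obtain a rotating balanced point and then use the defining property of a strict $1/k$-geodesic to produce a consecutive cut-point pair, making the tuple (and hence the class) non-smooth. The paper's own proof is just a terser version of exactly this argument.
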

\begin{proof} By Lemma~\ref{lem1} we know that the k-tuple will be a rotating balanced point. The fact that $\gamma$ is a strict 1/k-geodesic means that it contains cut points at distance $l(\gamma)/k$.
\end{proof}

\begin{lemma}\label{relation} To any non-smooth class of balanced points in $M^k$ we can associate a unique strict 1/k-geodesic.
\end{lemma}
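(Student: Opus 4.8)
The idea is to build the strict 1/k-geodesic directly out of the minimizing segments that the balanced structure provides. Let $\mathcal{C}$ be a non-smooth class of balanced points, fix an associated closed geodesic $\gamma_0$, and choose a non-smooth balanced point $\bar{x}=(x_1,\dots,x_k)\in\mathcal{C}$, realized at parameter $t$ with balanced vector $\bar{\xi}=(\xi_1,\dots,\xi_k)$; write $L=d(x_i,x_{i+1})$, which is independent of $i$ by Definition~\ref{balanced}. Since $\xi_i\in\widehat{x_ix_{i+1}}$, let $\sigma_i\colon[0,L]\to M$ be the unique minimizing geodesic with $\sigma_i(0)=x_i$ and $\dot{\sigma}_i(0)=\xi_i$, so that $\sigma_i(L)=x_{i+1}$. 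I would then let $\gamma$ be the concatenation $\sigma_1*\sigma_2*\cdots*\sigma_k$, a closed loop based at $x_1$ of length $kL$, and claim that, after the evident reparametrization over $S^1$, $\gamma$ is the strict 1/k-geodesic associated to $\mathcal{C}$. Note that when $\gamma_0$ itself is already a 1/k-geodesic one has $L=l(\gamma_0)/k$ and $\gamma=\gamma_0$; the construction only does something when $\gamma_0$ is, for instance, a multiple cover of a shorter geodesic, in which case $\gamma$ is $\gamma_0$ with its redundant loops excised.

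There are then three claims to verify, of which I expect only the first to be delicate. \textbf{(i) $\gamma$ is a smooth closed geodesic.} Each $\sigma_i$ is a subarc of $\gamma_0$, since it issues from $x_i=\gamma_0(\theta_i)$ with the velocity of $\gamma_0$ there; hence $\gamma_0$ decomposes as $\sigma_1*\ell_2*\sigma_2*\ell_3*\cdots*\sigma_k*\ell_1$, where $\ell_j$ is the geodesic loop at $x_j$ traced by $\gamma_0$ between the end of $\sigma_{j-1}$ and the start of $\sigma_j$. Smoothness of $\gamma$ amounts to $\dot{\sigma}_{j-1}(L)=\xi_j$, i.e.\ that each $\ell_j$ is a closed geodesic (no corner at $x_j$). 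This is where I would use the two-sided balanced condition: the reversal of $\sigma_{j-1}$ lies in $\widehat{x_jx_{j-1}}$, so does $-\xi_j$, and I would argue that the rotating hypothesis and the ``associated'' condition force these minimizing directions into $x_{j-1}$ from $x_j$ to coincide, which is precisely the corner-vanishing statement. \textbf{(ii) $\gamma$ is a 1/k-geodesic.} Here $l(\gamma)/k=L$, each $\sigma_i$ is minimizing by construction, and a subinterval of $\gamma$ of length $L$ straddling a vertex $x_j$ decomposes as (final arc of $\sigma_{j-1}$) followed by (initial arc of $\sigma_j$), meeting $C^1$ at $x_j$ by (i); both arcs are minimizing, and the balanced condition at $x_j$ — that $\sigma_{j-1}$ and $\sigma_j$ leave $x_j$ in genuinely opposite minimizing directions — is what prevents a shortcut, giving $d(\gamma(s),\gamma(s+2\pi/k))=L$ for all $s$. \textbf{(iii) $\gamma$ is strict.} Since $\mathcal{C}$ is non-smooth, some consecutive pair $(x_j,x_{j+1})$ is a pair of cut points, and they lie on $\gamma$ at distance $L=l(\gamma)/k$, so $\gamma$ contains cut points at that distance. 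The main obstacle is clearly (i): away from ordinary cut points the minimizing geodesics are unique and the matching of velocities is immediate (this is in effect Theorem~\ref{sormani}), but at ordinary cut points one must genuinely exploit the two-sided balanced condition together with the rotating hypothesis to see that the segments fit together without a corner.

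It remains to check that the class associated to $\gamma$ is $\mathcal{C}$ and that $\gamma$ is unique. For the first point, the vertices $(\gamma(0),\gamma(2\pi/k),\dots,\gamma(2\pi(k-1)/k))$ are exactly $\bar{x}\in\mathcal{C}$, and running the basepoint around $\gamma$ produces precisely the $k$-tuples that an associated geodesic $\gamma_0$ produces — $\gamma$ and $\gamma_0$ have the same image and the same $1/k$-sampling as a subset of $M^k$ — so the associated class is $\mathcal{C}$. For uniqueness, if $\gamma'$ is any strict 1/k-geodesic whose associated class is $\mathcal{C}$, then $s\mapsto(\gamma'(s),\gamma'(s+2\pi/k),\dots)$ parametrizes $\mathcal{C}$, so $\gamma'$ has the same vertex set and the same consecutive distance $L$ as $\gamma$; reading off the minimizing segments between consecutive vertices recovers $\gamma'=\gamma$ up to the choice of basepoint, direction, and starting index, i.e.\ up to the identifications under which $\gamma'$ and $\gamma$ are the same 1/k-geodesic. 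This proves the lemma and, combined with Lemmas~\ref{lem1} and \ref{strict_lemma} and Theorem~\ref{sormani3}, completes the proof of Theorem~\ref{combined}.
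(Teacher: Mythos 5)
Your construction attacks the lemma from the wrong end, and the step you yourself flag as delicate --- point (i), that the concatenation $\sigma_1 * \cdots * \sigma_k$ closes up smoothly at each vertex --- is never actually proved: you write that you ``would argue'' that the rotating and associated hypotheses force the relevant minimizing directions to coincide, but no argument is supplied, and none is available from the balanced condition alone. At an ordinary cut point the set $\widehat{x_j x_{j-1}}$ contains several vectors, so knowing that both $-\xi_j$ and the reversal of $\sigma_{j-1}$ lie in it gives no matching of velocities; the doubled-pentagon example in Section~\ref{geo} is exactly a balanced point whose minimizing segments cannot be assembled into any closed geodesic, so balancedness genuinely does not produce a geodesic by concatenation. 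Your step (ii) then presupposes (i), and your uniqueness argument (``reading off the minimizing segments between consecutive vertices recovers $\gamma'$'') fails for the same reason wherever consecutive vertices are ordinary cut points, since there is then more than one minimizing segment to read off.

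The paper's proof runs in the opposite direction and is essentially definitional. A \emph{class} of balanced points is, by definition, the orbit $\{(\gamma(t),\dots,\gamma(t+2\pi(k-1)/k)) : t\in S^1\}$ of a rotating balanced point under an \emph{associated closed geodesic} $\gamma$, so the closed geodesic is part of the data of the class and there is nothing to construct. The associated condition $\dot{\gamma}(t+2\pi(i-1)/k)\in\widehat{x_i x_{i+1}}$ says that a minimizing geodesic from $x_i$ to $x_{i+1}$ has the same initial point and initial velocity as the arc of $\gamma$ issuing from $x_i$, hence coincides with that arc; as this holds for every $i$ and every $t$, the arcs of $\gamma$ of parameter length $2\pi/k$ are minimizing and $\gamma$ is a 1/k-geodesic. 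Non-smoothness of the class supplies a $t_0$ at which some consecutive pair is a pair of cut points at distance $l(\gamma)/k$, which is precisely strictness, and uniqueness holds because the class determines its geodesic (recall the paper's remark that a balanced point associated to two distinct closed geodesics belongs to two \emph{different} classes). To repair your write-up, replace the concatenation construction by this observation: the correct use of the ``associated'' hypothesis is not to select which element of $\widehat{x_i x_{i+1}}$ to exponentiate, but to identify each minimizing segment with an arc of the already-given $\gamma$.
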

\begin{proof} By definition, any class of balanced points in $M^k$ comes with a unique associated 1/k-geodesic. The fact that the class of balanced points is non-smooth ensures that there will be a $t_0 \in S^1$ such that $(\gamma(t_0), \gamma(t_0 +2\pi/k), \ldots , \gamma(t_0 + 2\pi(k-1)/k) ) \in M^k$ is a non-smooth balanced point and therefore that the associated 1/k-geodesic will be strict. 
\end{proof}

\begin{thm}\label{strict} Strict 1/k-geodesics in a compact Riemannian manifold have a one-to-one correspondence with non-smooth classes of balanced points of the uniform energy in $M^k$ of nonzero energy.
\end{thm}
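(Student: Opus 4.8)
The plan is to package the three preceding lemmas into a pair of mutually inverse maps. By Lemma~\ref{strict_lemma} every strict 1/k-geodesic $\gamma$ determines a non-smooth class of balanced points $C_\gamma$, and by Lemma~\ref{relation} every non-smooth class of balanced points $C$ determines a strict 1/k-geodesic $\gamma_C$; I would take these as the correspondence and check that they are well defined, invert one another, and respect the nonzero-energy condition. The energy bookkeeping is immediate: if $\gamma$ is a strict 1/k-geodesic it contains a pair of cut points at distance $l(\gamma)/k$, so $l(\gamma)/k>0$ and hence $l(\gamma)>0$; for any $t\in S^1$ the k-tuple $\bar{x}=(\gamma(t),\gamma(t+2\pi/k),\ldots,\gamma(t+2\pi(k-1)/k))$ has $d(x_i,x_{i+1})=l(\gamma)/k$ for all $i$, so by Definition~\ref{ue} we have $E(\bar{x})=\sum_{i=1}^{k} k\,(l(\gamma)/k)^2=l(\gamma)^2\neq 0$. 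Conversely, if $C$ has nonzero energy then, since its associated 1/k-geodesic $\gamma_C$ is minimizing on subintervals of length $l(\gamma_C)/k$, the same computation gives $l(\gamma_C)^2=E(\bar{x})\neq 0$, so $\gamma_C$ is nontrivial, and it is strict by Lemma~\ref{relation}. Hence $\gamma\mapsto C_\gamma$ and $C\mapsto\gamma_C$ are well-defined maps between the two sets in the statement.

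Next I would verify that the two composites are the identity. Given a non-smooth class $C$ of nonzero energy, $C$ is by definition the rotating orbit $\{(\gamma_C(t),\gamma_C(t+2\pi/k),\ldots,\gamma_C(t+2\pi(k-1)/k)):t\in S^1\}$ of its associated geodesic $\gamma_C$, and the class $C_{\gamma_C}$ assigned to $\gamma_C$ by Lemma~\ref{strict_lemma} is, unwinding the definition of a class of balanced points, exactly this same orbit; hence $C_{\gamma_C}=C$. Given a strict 1/k-geodesic $\gamma$, the curve $\gamma$ is an associated closed geodesic of every k-tuple in $C_\gamma$: since $\gamma$ is a 1/k-geodesic, $\dot\gamma(t+2\pi(i-1)/k)\in\widehat{x_i x_{i+1}}$ and its negative lies in $\widehat{x_i x_{i-1}}$ for each $i$. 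Thus $\gamma$ is an associated strict 1/k-geodesic of the class $C_\gamma$, and the uniqueness clause of Lemma~\ref{relation} forces $\gamma_{C_\gamma}=\gamma$. Combined with the energy bookkeeping, this shows the two maps are inverse bijections between strict 1/k-geodesics and non-smooth classes of balanced points of nonzero energy.

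The step that carries the real content is the uniqueness in Lemma~\ref{relation} invoked in the second composite. An individual non-smooth balanced point can be associated to several closed geodesics --- the four-tuple of midpoints on the doubled square is associated to both over-under geodesics --- so the recovery of $\gamma$ cannot proceed from a single k-tuple and must use the entire rotating class. I would therefore make explicit two points: first, $\gamma\mapsto C_\gamma$ is independent of the base parameter $t$, since varying $t$ only cyclically reindexes the orbit, so $C_\gamma$ depends only on $\gamma$ viewed up to rotation of its domain; and second, in reconstructing $\gamma$ from $C$ one uses both the cyclic order of consecutive entries of the k-tuples and the fact that each $\xi_i$ in Definition~\ref{balanced} is the forward velocity of the minimizing arc from $x_i$ to $x_{i+1}$, so the direction of traversal --- and hence $\gamma$ itself, up to domain rotation --- is unambiguously determined. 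These observations are precisely what make $C\mapsto\gamma_C$ single-valued, and dually make $\gamma\mapsto C_\gamma$ injective and surjective onto the non-smooth classes of nonzero energy.
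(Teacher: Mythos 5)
Your proposal is correct and follows the same route as the paper, whose entire proof is the one-line observation that the result follows from Lemma~\ref{strict_lemma} and Lemma~\ref{relation}. The additional detail you supply --- the energy computation $E(\bar{x})=l(\gamma)^2$, the check that the two composites are identities, and the remark that uniqueness must be extracted from the whole rotating class rather than a single $k$-tuple (since a single balanced point can be associated to several geodesics) --- is all consistent with the paper and simply makes explicit what the paper leaves implicit.
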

\begin{proof} This fact follows directly from Lemma~\ref{strict_lemma} and Lemma~\ref{relation}.
\end{proof}

This theorem successfully extends the ideas of Sormani to the case where the 1/k-geodesics are not open and the balanced points are non-smooth. We can now combine Theorem~\ref{sormani3} and Theorem~\ref{strict} to obtain Theorem~\ref{combined} which gives a correspondence between classes of balanced points in $M^k$ (smooth or non-smooth) and 1/k-geodesics (open or strict).

%
%
%
\section{Balanced Points Under Gromov-Hausdorff Convergence}\label{balance5}

In this section we study the behavior of the uniform energy functional under Gromov-Hausdorff convergence. In \cite[Example 9.3]{Sor} Sormani showed that classes of smooth critical points of the uniform energy can disappear even under the smooth convergence of Riemannian manifolds (see Example~\ref{sphere}). In contrast, we prove Corollary~\ref{persist} which shows that classes of balanced points of the uniform energy persist under Gromov-Hausdorff convergence. We begin with an important result by Sormani concerning the persistence of 1/k-geodesics under Gromov-Hausdorff convergence.

\begin{thm}[\cite{Sor}, Theorem 7.1]\label{conv2} Let $M_i \to M$ be a sequence of compact Riemannian manifolds converging in the Gromov-Hausdorff sense. Let $\gamma_i \colon S^1 \to M_i$ be a sequence of $1/k$-geodesics. Then a subsequence of the $\gamma_i$ converge point-wise to a continuous curve $\gamma \colon S^1 \to M$, and $\gamma$ is either a $1/k$-geodesic or trivial. 
\end{thm}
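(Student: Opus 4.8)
The plan is to combine an Arzel\`a--Ascoli compactness argument with the observation that the $1/k$-minimizing property of each $\gamma_i$ yields \emph{exact} distance identities along the curve, and that such identities -- unlike lengths -- survive passage to a uniform limit. To make the convergence concrete I would invoke Gromov's embedding theorem: there is a compact metric space $(Z,d_Z)$ containing isometric copies of all the $M_i$ and of $M$ with $M_i \to M$ in the Hausdorff distance inside $Z$, and from now on each $\gamma_i$ is viewed as a curve in $Z$. Since $M_i \to M$ in the Gromov--Hausdorff sense and $M$ is compact, $\operatorname{diam}(M_i)$ is bounded by some constant $D$, and the defining identity $d(\gamma_i(t),\gamma_i(t+2\pi/k)) = l(\gamma_i)/k$ forces $l(\gamma_i) \le kD$. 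Parametrizing $S^1 = \mathbb{R}/2\pi\mathbb{Z}$ proportionally to arclength, each $\gamma_i$ is Lipschitz with constant $l(\gamma_i)/(2\pi) \le kD/(2\pi)$, so $\{\gamma_i\}$ is uniformly equicontinuous in the compact space $Z$, and Arzel\`a--Ascoli provides a subsequence (not relabeled) converging uniformly to a Lipschitz curve $\gamma \colon S^1 \to Z$. As $\gamma_i(S^1) \subset M_i$, $M_i \to M$ in Hausdorff distance, and $M$ is closed in $Z$, the image of $\gamma$ lies in $M$; passing to a further subsequence I may assume $l(\gamma_i) \to L$ for some $L \ge 0$.

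The key step is the following. Because $\gamma_i$ is a $1/k$-geodesic, its restriction to every closed arc of $S^1$ of length $2\pi/k$ has length $l(\gamma_i)/k$ equal to the distance between its endpoints, hence is a minimizing geodesic; consequently, for any parameters $s,t$ whose distance $\rho(s,t)$ around $S^1$ is at most $2\pi/k$ one has the exact identity $d_{M_i}(\gamma_i(s),\gamma_i(t)) = \frac{l(\gamma_i)}{2\pi}\rho(s,t)$, which also holds as an identity in $Z$ since $M_i \hookrightarrow Z$ isometrically. Distances in $Z$ are continuous under uniform convergence, so letting $i \to \infty$ gives $d_M(\gamma(s),\gamma(t)) = \frac{L}{2\pi}\rho(s,t)$ whenever $\rho(s,t) \le 2\pi/k$, using that the metric on $M$ is its Riemannian distance. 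If $L = 0$ this makes $\gamma$ constant, which is the trivial alternative.

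Now suppose $L > 0$. The identity just derived says precisely that $\gamma$ restricted to each closed arc of length $2\pi/k$ is a constant-speed minimizing geodesic of $M$, in particular smooth; covering $S^1$ by finitely many such arcs -- including arcs straddling each division point $2\pi j/k$ and the seam $0 \equiv 2\pi$ -- shows $\gamma$ is globally smooth, a geodesic of constant speed $L/(2\pi)$ with $l(\gamma) = L$, $\gamma(0) = \gamma(2\pi)$ and $\dot\gamma(0) = \dot\gamma(2\pi)$; hence $\gamma$ is a closed geodesic. Applying the identity with $t = s + 2\pi/k$ yields $d_M(\gamma(s),\gamma(s+2\pi/k)) = L/k = l(\gamma)/k$ for every $s$, so $\gamma$ is a $1/k$-geodesic, completing the argument.

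The point I expect to be the main obstacle is the standard pitfall of arguing through lengths: length is only lower semicontinuous under uniform convergence, so one cannot conclude $l(\gamma) = \lim l(\gamma_i)$, nor that a uniform limit of geodesics is a geodesic. The device that circumvents this is to replace the length functional by the exact distance formula supplied by the $1/k$-minimizing hypothesis, which genuinely is continuous along the convergent subsequence. The remaining work is bookkeeping: constructing the common space $Z$, verifying that the limit curve has image in $M$ and that the ambient metric restricts to the Riemannian distance of the limit, and disposing of the $L = 0$ case.
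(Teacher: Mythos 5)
Your proof is correct. Note, however, that the paper does not prove this statement at all: it is quoted verbatim from Sormani (\cite{Sor}, Theorem 7.1) and used as a black box, so there is no internal proof to compare against. Your argument is the standard one and matches the substance of Sormani's original: a uniform Lipschitz bound (speed $l(\gamma_i)/2\pi \le kD/2\pi$, with $l(\gamma_i)\le k\,\mathrm{diam}(M_i)$ forced by the $1/k$-minimizing identity) gives equicontinuity and hence a convergent subsequence, and the exact identity $d(\gamma_i(s),\gamma_i(t))=\tfrac{l(\gamma_i)}{2\pi}\rho(s,t)$ for $\rho(s,t)\le 2\pi/k$ -- rather than the merely lower semicontinuous length functional -- is what passes to the limit and certifies that the limit is a constant-speed minimizer on each $2\pi/k$-arc, hence a closed geodesic that is a $1/k$-geodesic when $L>0$ and a point when $L=0$. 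The only cosmetic difference from Sormani's write-up is that you realize the convergence inside a common compact ambient space $Z$ via Gromov's embedding theorem, whereas she works with $\epsilon_i$-almost isometries $M_i\to M$; the two devices are interchangeable here, and your version arguably makes the continuity of distances under the limit cleaner to state.
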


\begin{ex}[\cite{Sor}, Example 7.2]\normalfont \label{sphere2} This example illustrates that unlike 1/k-geodesics, closed geodesics may disappear under Gromov-Hausdorff convergence. Consider the sequence of ellipsoids $M_i$ given by $x^2 + y^2 + (z/c_i)^2 =1$. We see that the equators $\gamma_i=(\cos{t},\sin{t},0)$ of these ellipsoids are all closed geodesics. As $c_i \to 0$ the sequence $M_i$ converges in the Gromov-Hausdorff sense to a doubled disk. The equator on the doubled disk is not a locally length minimizing curve, hence not a closed geodesic, and we have demonstrated a disappearing sequence of closed geodesics under Gromov-Hausdorff convergence.

Let us view this example in light of Theorem~\ref{conv2}. We note by Proposition~\ref{all} that each equator $\gamma_i$ is a 1/k-geodesic for some integer $k$. However, as $c_i \to 0$ this integer $k$ grows without bound. The sequence of equators $\gamma_i$ is therefore not a sequence of 1/k-geodesics for any fixed integer $k$, and the example does not contradict the statement of Theorem~\ref{conv2}.
\end{ex}

\begin{ex}[\cite{Sor}, Example 9.3]\normalfont \label{sphere} This example illustrates that classes of smooth critical points of the uniform energy can disappear even under the smooth convergence of Riemannian manifolds. Consider again the ellipsoids given by $x^2 + y^2 + (z/c)^2 =1$. We note that the distance between cut points along the equators $\gamma_c$ is a continuous function of the parameter $c$. There will therefore be some $c_0$ such that $\gamma_{c_0}$ is a strict $1/3$-geodesic. Then for all $c>c_0$ we have that $\gamma_c$ is an openly $1/3$-geodesic and by Theorem~\ref{sormani3} corresponds to a class of smooth critical points of the uniform energy. Choosing a sequence $c_i \to c_0$ with $c_i > c_0$ and considering the associated sequence $M_i$ of ellipsoids converging to $M_{c_0}$ we see that the classes of smooth critical points associated to the equators disappear in the limit space $M_{c_0}$.

We note by Theorem~\ref{combined} that each of the equator $1/3$-geodesics (even that in the limit $M_{c_0}$) is associated to a class of balanced points of the uniform energy, leading us to the statement of Corollary~\ref{persist} that classes of balanced points persist in the limit.
\end{ex}

\begin{proof}[Proof of Corollary~\ref{persist}] Let $M_i$ be a sequence of compact Riemannian manifolds converging to a compact Riemannian manifold $M$ in the Gromov-Hausdorff sense. Assume that each $M_i$ admits a class of balanced points of the uniform energy $E \colon M_i^k \to \mathbb{R}$. Then by Theorem~\ref{combined} we know that each class corresponds to a $1/k$-geodesic $\gamma_i \colon S^1 \to M_i$ and by Theorem~\ref{conv2} there exists a subsequence of the $\gamma_i$ converging to $\gamma\colon S^1 \to M$, where $\gamma$ is either a $1/k$-geodesic or trivial. When $\gamma$ is nontrivial Theorem~\ref{combined} tells us that there exists a corresponding class of balanced points of the uniform energy $E \colon M^k \to \mathbb{R}$.
\end{proof}

We note that individual balanced points of the uniform energy need not persist under Gromov-Hausdorff convergence. In \cite[Example 2.2]{Sor} we see that each of the handled spheres $M_j$ admits a balanced point of the uniform energy $E \colon M^2_j \to \mathbb{R}$ which disappears on the standard sphere in the limit.

%
%
%
%

\section{Half-geodesics and Grove-Shiohama Critical Points of Distance}\label{half2}

The half-geodesics are those closed geodesics which minimize on any subinterval of length $l(\gamma)/2$. It is clear that a half-geodesic will never be an openly half-geodesic and will always be a strict half-geodesic. In Section~\ref{geo} we generalized a result of Sormani to provide a relationship between the strict 1/k-geodesics and the non-smooth balanced points of the uniform energy. We will now apply this result to study the half-geodesics via critical point methods. For an element $\bar{x} = (x_1,x_2)  \in M^2 $ we calculate the uniform energy to be 
\begin{equation*}E(\bar{x}) = \sum_{i=1}^2 \frac{d(x_i , x_{i+1})^2}{1/2} = 4 \cdot d(x_1,x_2)^2
\end{equation*}
We know that the distance function is not everywhere smooth, so we recall the definition of a Grove-Shiohama critical point of the distance function and relate this notion to the balanced points of the uniform energy $E \colon M^2 \to \mathbb{R}$. 

\begin{definition}\label{GSdef2} A \emph{Grove-Shiohama critical point} of $d_p \colon M \to \mathbb{R}$ is a point $q \in M$ such that for any $v \in T_qM$ there exists $\xi \in \widehat{qp}$ such that $\measuredangle {(v, \xi)}  \leq \pi/2$.
\end{definition}

The original application of this critical point definition is the celebrated Grove-Shiohama \cite{Gro} diameter sphere theorem.

\begin{thm}[Grove-Shiohama]\label{GSthm2} Let $M$ be a complete Riemannian manifold with sectional curvature $K \geq H > 0$ and diameter $> \frac{\pi}{2\sqrt{H}}$. Then $M$ is homeomorphic to the sphere. 
\end{thm}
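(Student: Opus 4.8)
The plan is to exhibit a point $p\in M$ whose distance function $d_p$ has, besides $p$ itself, exactly one Grove-Shiohama critical point, and then to use the isotopy lemma for distance functions to present $M$ as a union of two $n$-disks glued along their boundary spheres. First I would apply the Bonnet-Myers theorem to see that $M$ is compact with $\operatorname{diam}(M)\le\pi/\sqrt H$, and fix $p,q\in M$ realizing the diameter, so that $d(p,q)=D:=\operatorname{diam}(M)>\pi/(2\sqrt H)$. Here $q$ is a point at maximal distance from $p$ and $p$ is a point at maximal distance from $q$. A global maximum of a distance function is automatically a Grove-Shiohama critical point (Definition~\ref{GSdef2}): if $q$ maximizes $d_p$, then for every $v\in T_qM$ the one-sided derivative of Lemma~\ref{direction_deriv} satisfies $D^+_v d_p(q)\le 0$, which forces some $\xi\in\widehat{qp}$ with $\measuredangle(v,\xi)\le\pi/2$. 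Thus $q$ is a Grove-Shiohama critical point of $d_p$, and symmetrically $p$ is one of $d_q$.

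The heart of the argument---and the step I expect to be the main obstacle---is the claim that $q$ is the \emph{only} Grove-Shiohama critical point of $d_p$ in $M\setminus\{p\}$. To prove it I would suppose $x\neq p$ were another one, set $a=d(x,p)$ and $b=d(x,q)$, and apply the hinge form of Toponogov's comparison theorem (valid since $K\ge H>0$ and minimal geodesics have length $\le\pi/\sqrt H$). Using the criticality of $x$ I can choose a minimal geodesic from $x$ to $p$ meeting a minimal geodesic from $x$ to $q$ at angle $\le\pi/2$; the hinge at $x$ built from these yields
\[ \cos(\sqrt H\,D)\ \ge\ \cos(\sqrt H\,a)\cos(\sqrt H\,b). \]
Running the same argument at $p$, where the criticality of $p$ for $d_q$ supplies a minimal geodesic from $p$ to $q$ meeting a minimal geodesic from $p$ to $x$ at angle $\le\pi/2$, gives
\[ \cos(\sqrt H\,b)\ \ge\ \cos(\sqrt H\,a)\cos(\sqrt H\,D). \]
Since $D>\pi/(2\sqrt H)$ forces $\cos(\sqrt H\,D)<0$, I would finish by a short sign analysis. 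If both $\cos(\sqrt H\,a)$ and $\cos(\sqrt H\,b)$ are negative, the first inequality makes $\cos(\sqrt H\,D)>0$, a contradiction. If $\cos(\sqrt H\,b)\ge0$, then combining the first inequality with the trivial bound $a\le D$ (so $\cos(\sqrt H\,a)\ge\cos(\sqrt H\,D)$) forces $\cos(\sqrt H\,b)=1$, i.e.\ $b=0$ and $x=q$. In the remaining case $\cos(\sqrt H\,a)>0>\cos(\sqrt H\,b)$ (the value $\cos(\sqrt H\,a)=0$ is excluded directly by the second inequality), combining the two displayed inequalities forces $\cos^2(\sqrt H\,a)\ge1$, hence $a=0$ and $x=p$, contrary to hypothesis. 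Therefore $x=q$, and symmetrically $p$ is the only Grove-Shiohama critical point of $d_q$ besides $q$.

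With uniqueness in hand I would conclude by standard critical-point machinery. Since $d_p$ has no Grove-Shiohama critical points in $d_p^{-1}\big((\epsilon,D)\big)$, the isotopy lemma for distance functions (Cf.~\cite{Gro,GP,Gromov}) provides a gradient-like flow showing that $d_p^{-1}\big([\epsilon,D]\big)$ is homeomorphic to the cone on the sphere $d_p^{-1}(\epsilon)=\partial B(p,\epsilon)$, hence to a closed $n$-disk, while $\overline{B(p,\epsilon)}$ is itself a closed $n$-disk once $\epsilon$ is below the injectivity radius at $p$. Thus $M$ is the union of two closed $n$-disks glued along their boundary $(n-1)$-spheres, and the Alexander trick shows that any such space is homeomorphic to $S^n$. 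The compactness input and this final decomposition are routine; the genuinely delicate point is the Toponogov sign analysis above, which is precisely where the hypothesis $\operatorname{diam}(M)>\pi/(2\sqrt H)$ enters.
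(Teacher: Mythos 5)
The paper does not prove Theorem~\ref{GSthm2}; it is quoted as a classical result and attributed to \cite{Gro}, so there is no in-paper argument to compare yours against. Your reconstruction is correct and is essentially the standard Grove--Shiohama proof: Berger's lemma (which you correctly rederive from Lemma~\ref{direction_deriv}, since $D^+_vd_p(q)\le 0$ at a maximum forces some $\xi\in\widehat{qp}$ with $\langle v,\xi\rangle\ge 0$) makes $p$ and $q$ mutually critical; the two Toponogov hinge inequalities at $x$ and at $p$, together with your sign analysis, do show that $q$ is the only critical point of $d_p$ besides $p$ (all three sign cases are covered, and the borderline subcases $\cos(\sqrt H\,a)=0$ and $\cos(\sqrt H\,b)=0$ are handled); and the passage to a union of two disks via the isotopy lemma plus the Alexander trick is the standard conclusion. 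The only point of contact inside the paper is the proof of Theorem~\ref{behavior}, whose second half runs the same exclusion step but packages it differently: rather than chasing signs through two hinge inequalities, it applies the triangle version of Toponogov to a geodesic triangle on $p$, $q$, $x$ all of whose angles are $\le\pi/2$, notes that the comparison triangle in $S_H$ then lies in an octant and so has all sides $\le\frac{\pi}{2\sqrt H}$, and contradicts $d(p,q)>\frac{\pi}{2\sqrt H}$ in one stroke (this is also the form in Cheeger's survey \cite{Che}). Your two-hinge version is equivalent and arguably more self-contained, since it avoids the mild subtlety of choosing possibly different minimal segments $\gamma_1$, $\tilde\gamma_1$ from $p$ to $q$ and invoking congruence of the two comparison triangles; the cost is the explicit case analysis.
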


\begin{prop}\label{GS2} If $(p,q) \in M^2$ is a balanced point of the uniform energy $E \colon M^2 \to \mathbb{R}$ then we have that $q$ is a Grove-Shiohama critical point of $d_p$ and that $p$ is a Grove-Shiohama critical point of $d_q$.
\end{prop}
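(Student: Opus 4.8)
The plan is to unwind Definition~\ref{balanced} in the special case $k=2$ and then verify the Grove-Shiohama condition by an elementary sign argument. With $k=2$ we have $\bar{x}=(x_1,x_2)=(p,q)$, and since the indices are read cyclically modulo $k=2$ we get $x_{i-1}=x_{i+1}$ for both $i$. Consequently the distance condition $d(x_{i-1},x_i)=d(x_i,x_{i+1})$ in Definition~\ref{balanced}(a) is automatic, and the vector condition collapses to the following: there is a unit vector $\xi_1\in T_pM$ with both $\xi_1\in\widehat{pq}$ and $-\xi_1\in\widehat{pq}$, and a unit vector $\xi_2\in T_qM$ with both $\xi_2\in\widehat{qp}$ and $-\xi_2\in\widehat{qp}$. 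In other words, the set of initial velocities of unit-speed minimizing geodesics from $q$ to $p$ contains an antipodal pair $\{\xi_2,-\xi_2\}$, and likewise at $p$ for geodesics to $q$.

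Next I would check the Grove-Shiohama condition of Definition~\ref{GSdef2} at $q$ directly. Let $v\in T_qM$ be arbitrary. If $\langle v,\xi_2\rangle\ge 0$ put $\eta=\xi_2$; otherwise $\langle v,-\xi_2\rangle>0$ and we put $\eta=-\xi_2$. In either case $\eta\in\widehat{qp}$ and $\langle v,\eta\rangle\ge 0$, and since $\eta$ is a unit vector this is exactly the statement $\measuredangle(v,\eta)\le\pi/2$. As $v$ was arbitrary, $q$ is a Grove-Shiohama critical point of $d_p$. The identical argument with $\xi_1\in\widehat{pq}$ and $-\xi_1\in\widehat{pq}$ in place of $\xi_2$ shows that $p$ is a Grove-Shiohama critical point of $d_q$, which completes the proof.

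The argument is essentially a pigeonhole on the sign of an inner product, so I do not expect a substantial obstacle. The only points requiring care are the cyclic index bookkeeping that makes the $k=2$ balanced-point condition amount to an antipodal pair of minimizing directions at each of $p$ and $q$, and the routine translation, for unit vectors, between the angle bound $\measuredangle(\cdot,\cdot)\le\pi/2$ and the inequality $\langle\cdot,\cdot\rangle\ge 0$. One may also remark that the degenerate case $v=0$ is covered, since then $\langle v,\eta\rangle=0$ for any choice of $\eta\in\widehat{qp}$.
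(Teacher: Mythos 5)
Your proposal is correct and follows essentially the same route as the paper: the $k=2$ balanced-point condition yields an antipodal pair of minimizing directions in $\widehat{qp}$ (the paper phrases this as two vectors of $\widehat{qp}$ meeting at angle $\pi$ in $T_qM$), and then for any $v$ one of the pair makes angle at most $\pi/2$ with $v$. You simply make the sign-of-inner-product pigeonhole and the cyclic index bookkeeping explicit where the paper leaves them implicit.
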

\begin{proof} The balancing condition ensures that there are vectors $\xi_1, \xi_2 \in \widehat{qp}$ which meet at angle $\pi$ in $T_qM$. Thus $q$ is a Grove-Shiohama critical point of $d_p$. The proof that $p$ is a Grove-Shiohama critical point of $d_q$ is equivalent.
\end{proof}

The Grove-Shiohama definition proved fruitful and sparked a comprehensive study of the critical points of the distance function. The survey by Cheeger \cite{Che} addresses applications of the Grove-Shiohama notion of critical points. Proposition~\ref{GS2} allows us to apply this critical point theory to the study of half-geodesics. We follow established methods from critical point theory in employing the theorem of Toponogov to prove Theorem~\ref{behavior}.  

\begin{proof}[Proof of Theorem~\ref{behavior}.] We note that $\text{diam}(M) \geq d(\gamma(t),\gamma(t+\pi) ) >  \frac{\pi}{2\sqrt{H}}$ so that by the diameter sphere theorem $M$ is homeomorphic to a sphere. 

First assume by contradiction that $\sigma \colon S^1 \to M$ is a closed geodesic with $l(\sigma)<l(\gamma)$ and $\sigma(0)=p=\gamma(t)$. Then by Theorem~\ref{combined} and Proposition~\ref{GS2} we know that $p$ is a Grove-Shiohama critical point of $d_q$ where $q=\gamma(t+\pi)$. Therefore there exists a minimum geodesic $\tau$ from $p$ to $q$ with $\alpha = \measuredangle{(\dot{\sigma}(0),\dot{\tau}(0) )} \leq \pi/2$. Let $l(\sigma)= \frac{\pi}{\sqrt{H_1}} < l(\gamma) =2 \, l(\tau)= \frac{\pi}{\sqrt{H_0}}$ so that $H_0 < H_1$ and $H_0 < H$. Set $H_2 = \min{ \{H, H_1\} }$. Although $\sigma$ is not a segment we know $l(\sigma) \leq  \frac{\pi}{\sqrt{H_2}}$ so that we can apply Toponogov's theorem to the hinge $\{\sigma, \tau \}$. This yields a comparison triangle in the sphere $S_{H_2}$ with side lengths $\{ B= \frac{\pi}{2\sqrt{H_0}} , C= \frac{\pi}{\sqrt{H_1}}, D \}$ where $B \leq D \leq \frac{\pi}{\sqrt{H_2}}$. We note that $\frac{\pi}{2} < B \sqrt{H_2} \leq D \sqrt{H_2} \leq \pi$ so that by Cosine Law we have
\begin{align*}
0 &>\cos{B\sqrt{H_2}} \geq \cos{D\sqrt{H_2}} \\
&= \cos{B\sqrt{H_2}} \cdot \cos{ C\sqrt{H_2}} +  \sin{B\sqrt{H_2}} \cdot \sin{ C\sqrt{H_2}} \cdot \cos{\alpha} \\
&\geq  \cos{B\sqrt{H_2}} \cdot \cos{ C\sqrt{H_2}} 
\end{align*}
a contradiction. Thus such a closed geodesic $\sigma$ must have $l(\sigma)\geq l(\gamma)$.

Next assume by contradiction that $\sigma \colon S^1 \to M$ is a half-geodesic with  $\sigma(0)=p=\gamma(t)$ but which does not contain the point $\gamma(t+\pi)=q$. Then by Theorem~\ref{combined} and Proposition~\ref{GS2} we know that $\sigma(\pi)=x \neq q$ is a Grove-Shiohama critical point of $d_p$. Let $\gamma_2$ be a minimal geodesic from $q$ to $x$. We know there exists a minimal geodesic $\gamma_0$ from $x$ to $p$ such that $\measuredangle{(-\dot{\gamma}_2(d(q,x)),\dot{\gamma}_0(0))} \leq \pi/2$. Since $p$ and $q$ are mutually critical there exists minimal geodesics $\gamma_1$ and $\tilde{\gamma}_1$ from $p$ to $q$ such that $\measuredangle{(\dot{\gamma}_1(0),\dot{\gamma}_0(d(p,x)))} \leq \pi/2$ and $\measuredangle{(-\dot{\tilde{\gamma}}_1(d(p,q)),\dot{\gamma}_2(0))} \leq \pi/2$. Now apply the triangle version of Toponogov's theorem to both $\{\gamma_0, \gamma_1, \gamma_2 \}$ and $\{\gamma_0, \tilde{\gamma}_1, \gamma_2 \}$ yielding comparison triangles in the sphere $S_H$. Because triangles in the sphere are determined up to congruence by side lengths we get a unique comparison triangle, each of whose angles is $\leq \pi/2$. This implies that the comparison triangle is completely contained in an octant of the sphere, hence has side lengths $\leq \frac{\pi}{2\sqrt{H}}$, a contradiction to $l(\gamma_1)=l(\gamma)/2>\frac{\pi}{2\sqrt{H}}$. We conclude that the half-geodesic $\sigma$ must contain the point $\gamma(t+\pi)$ and therefore must have length $l(\gamma)$.
\end{proof}

\begin{remark}\normalfont We note that the techniques in the first part of the proof of Theorem~\ref{behavior} are inspired by Berger's proof of the minimal diameter theorem (Cf.~\cite{Pet}). The second part of the proof is similar to the Grove-Shiohama proof of the diameter sphere theorem (Cf.~\cite{Che}).
\end{remark}

\begin{ex}\normalfont \label{round} The round sphere of constant sectional curvature $H$ satisfies the assumptions of Theorem~\ref{behavior}. The only half-geodesics are the great circles which all have length $\frac{2\pi}{\sqrt{H}}$. Any half-geodesic through $p$ contains its antipode $-p$.
\end{ex}

The theorem of Bonnet-Myers states that a complete Riemannian manifold with sectional curvature $K \geq H>0$ has an upper diameter bound of $\frac{\pi}{\sqrt{H}}$ and therefore an upper bound of $\frac{2\pi}{\sqrt{H}}$ on the length of half-geodesics. Moreover, Cheng's rigidity theorem \cite[Theorem 3.1]{Cheng} states that the round sphere is the only such manifold to realize this upper diameter bound. The following example illustrates that Theorem~\ref{behavior} has applications beyond the round sphere by constructing complete Riemannian manifolds with sectional curvatures $K \geq H>0$ and admitting half-geodesics of lengths strictly between $\frac{\pi}{\sqrt{H}}$ and $\frac{2\pi}{\sqrt{H}}$. This example also demonstrates that there are non-round metrics on the sphere which admit half-geodesics of length twice the diameter.

\begin{ex}\normalfont \label{oblate} Starting with the round two-sphere of curvature $H_0 > H$ we construct an oblate ellipsoid by shrinking the sphere along the $z$-axis while keeping the equator fixed. We allow the construction to proceed until the curvature at the poles is exactly $H$. The resulting surface is a complete Riemannian manifold with sectional curvature $K \geq H>0$. In this construction the meridians remain half-geodesics and have length twice the new diameter. We note that the equator now fails to minimize between antipodal points and is no longer a half-geodesic. If $H_0$ is not too much greater than $H$ then the new meridians will have length strictly between $\frac{\pi}{\sqrt{H}}$ and $\frac{2\pi}{\sqrt{H}}$. The conclusions of Theorem~\ref{behavior} therefore apply to any closed geodesics that intersect a meridian.
\end{ex}

The next example demonstrates that the strict lower bound imposed on the length of the half-geodesics in Theorem~\ref{behavior} is sharp.

\begin{ex}\normalfont \label{proj} The standard real projective space of constant sectional curvature $H$ admits half-geodesics of length $\frac{\pi}{\sqrt{H}}$. Through any point $[p]$ on the identified equator there exists a half geodesic $\gamma$ with $\gamma(0)=[n]$ and $\gamma(\pi)=[p]$. Thus there exist multiple half-geodesics through $[n]$ which fail to meet again after time $\pi$. 
\end{ex}

\begin{cor}\label{half_geo_cor} Let $M$ be a complete Riemannian manifold with sectional curvature $K \geq H >0$ and diameter $> \frac{\pi}{2\sqrt{H}}$. Let  $p,q \in M$ be points which realize the diameter. Then any half-geodesic through $p$ must contain $q$ and have length twice the diameter.
\end{cor}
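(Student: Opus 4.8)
The plan is to reprise the second half of the proof of Theorem~\ref{behavior}, letting the diameter-realizing pair $(p,q)$ play the role that $(\gamma(t),\gamma(t+\pi))$ plays there. The one new ingredient is the classical observation that a point realizing the maximum of a distance function is a Grove-Shiohama critical point of it: if $q$ realizes $\max_M d_p$, then $d_p(\exp_q(tw))\le d_p(q)$ for all $w\in T_qM$ and all $t$, so $D^+_w d_p(q)\le 0$, and by Lemma~\ref{direction_deriv} this forces some $\xi\in\widehat{qp}$ with $\langle w,\xi\rangle\ge 0$, i.e.\ $\measuredangle(w,\xi)\le\pi/2$. Applying this to both $p$ and $q$ shows that $p$ and $q$ are mutually Grove-Shiohama critical (and, as in the proof of Theorem~\ref{behavior}, since $\text{diam}(M)=d(p,q)>\frac{\pi}{2\sqrt H}$, Theorem~\ref{GSthm2} shows $M$ is homeomorphic to a sphere).

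Next I would take any half-geodesic through $p$, reparametrize so that $\sigma(0)=p$, and set $x=\sigma(\pi)$, so that $d(p,x)=l(\sigma)/2$. Suppose toward a contradiction that $x\ne q$. By Lemma~\ref{lem1} the pair $(\sigma(0),\sigma(\pi))=(p,x)$ is a balanced point of $E\colon M^2\to\mathbb R$, so Proposition~\ref{GS2} gives that $x$ is a Grove-Shiohama critical point of $d_p$ and $p$ one of $d_x$. We are thus in exactly the situation of the second part of the proof of Theorem~\ref{behavior}: three points $p,q,x$ with $x\ne q$, with $p$ and $q$ mutually critical, with $x$ critical for $d_p$, and with $p$ critical for $d_x$. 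Copying that argument verbatim — choose a minimal geodesic $\gamma_2$ from $q$ to $x$, use criticality of $x$ for $d_p$ to produce a minimal geodesic $\gamma_0$ from $x$ to $p$, and use that $p,q$ are mutually critical to produce minimal geodesics $\gamma_1,\tilde\gamma_1$ from $p$ to $q$, all with the stated angle bounds — and applying the triangle version of Toponogov's theorem to $\{\gamma_0,\gamma_1,\gamma_2\}$ and to $\{\gamma_0,\tilde\gamma_1,\gamma_2\}$ in $S_H$ yields a single comparison triangle all of whose angles are $\le\pi/2$; such a triangle lies in an octant of $S_H$, so all of its sides have length $\le\frac{\pi}{2\sqrt H}$. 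But one of these sides has length $l(\gamma_1)=d(p,q)=\text{diam}(M)>\frac{\pi}{2\sqrt H}$, a contradiction. Hence $x=q$, so $\sigma$ contains $q$, and $l(\sigma)/2=d(\sigma(0),\sigma(\pi))=d(p,q)=\text{diam}(M)$, i.e.\ $l(\sigma)=2\,\text{diam}(M)$.

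Once one sees that the mutual criticality of a diameter pair is the only thing Theorem~\ref{behavior}'s Toponogov argument actually needs from $\gamma$, the proof is essentially bookkeeping. The one point that still requires care is the one already present in that argument: controlling all three angles of the triangle $p,q,x$ is possible only because $p$ is critical for \emph{both} $d_q$ and $d_x$, which is why two overlapping comparison triangles must be used rather than one. In contrast to Theorem~\ref{behavior} there is no need to verify separately that $l(\sigma)>\frac{\pi}{\sqrt H}$, since the comparison geodesics $\gamma_0,\gamma_1,\gamma_2$ are minimizing and hence of length at most $\frac{\pi}{\sqrt H}$ by Bonnet-Myers, which is all that is required to apply Toponogov in $S_H$.
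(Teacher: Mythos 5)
Your proposal is correct and follows essentially the same route as the paper: the paper cites Berger's lemma for the mutual criticality of the diameter-realizing pair (which you instead derive directly from Lemma~\ref{direction_deriv}) and then observes that the Toponogov argument in the second half of the proof of Theorem~\ref{behavior} only uses that $p,q$ are mutually critical and that $x=\sigma(\pi)$ is critical for $d_p$, so $q$ is the unique critical point of $d_p$ and must coincide with $\sigma(\pi)$. Your closing observations (that only the triangle version of Toponogov is needed, and that the minimality of $\gamma_0,\gamma_1,\gamma_2$ makes the length hypothesis of the hinge argument unnecessary) are accurate and consistent with the paper's proof.
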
 
\begin{proof} Berger's lemma (Cf.~\cite{CE}) says that diameter realizing points are always mutual Grove-Shiohama critical points. The proof of Theorem~\ref{behavior} shows that $q$ is the unique Grove-Shiohama critical point of $d_p$ and the corollary follows.
\end{proof}

\begin{lemma}[Gromov, Cf. \cite{Che}]\label{gromov2} Let $M$ be a complete non-compact Riemannian manifold with sectional curvature $K \geq 0$. Then for every $p \in M$ the distance function $d_p$ has no critical points outside of some ball $B(p,R_p)$. In particular, $M$ is homeomorphic to the interior of a compact manifold with boundary. 
\end{lemma}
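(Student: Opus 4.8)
The plan is to show that the set of Grove-Shiohama critical points of $d_p$ is contained in a bounded set, and then invoke the isotopy lemma from critical point theory. The key tool is Toponogov's theorem (here the comparison is against Euclidean space, since $K \geq 0$), which will force the angle configuration at a critical point to be geometrically impossible once the point is too far from $p$.

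First I would recall the key geometric input: if $q$ is a Grove-Shiohama critical point of $d_p$, then by Definition~\ref{GSdef2}, for every direction $v \in T_qM$ there is a minimizing geodesic $\xi \in \widehat{qp}$ making an angle $\leq \pi/2$ with $v$. Now suppose for contradiction that there is a sequence $q_n$ of critical points with $d(p,q_n) \to \infty$. Using that $M$ is complete and noncompact with $K \geq 0$, I would extract from the normalized minimizing geodesics $p q_n$ a ray $\rho$ emanating from $p$ (Cohn--Vossen / standard noncompactness argument: geodesics realizing distance to points going to infinity subconverge to a ray). The idea is then to produce, for each large $n$, a second minimizing geodesic from $q_n$ to $p$ (guaranteed by the critical point property applied with $v = -\dot\gamma_n(l_n)$ where $\gamma_n$ is one minimizing geodesic from $p$ to $q_n$, so that the returning direction must make angle $\leq \pi/2$ with some other minimizing geodesic direction), forming a hinge at $p$ between $\rho$ and these geodesics.

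The main step — and the main obstacle — is the Toponogov angle estimate. At $q_n$, pick $v = -\dot\gamma_n$; criticality gives $\sigma_n \in \widehat{q_n p}$ with $\measuredangle(-\dot\gamma_n, \sigma_n) \leq \pi/2$. The two minimizing geodesics $\gamma_n$ and $\sigma_n$ from $p$ to $q_n$, together with the hinge they form at $p$, bound a geodesic triangle with a vertex angle at $p$ that I need to bound \emph{below}, using the angle at $q_n$ together with the Euclidean law of cosines via Toponogov. Comparing in $\mathbb{R}^2$: a triangle with two sides of length $l_n = d(p,q_n)$ (the lengths of $\gamma_n, \sigma_n$) whose angle at the $q_n$-vertex is $\leq \pi/2$ cannot have its third side — the chord at $p$ — be arbitrarily short; in fact the angle at $p$ between $\gamma_n$ and $\sigma_n$ is bounded below away from $0$ by Toponogov. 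Letting $n \to \infty$ and using that $\dot\gamma_n \to \dot\rho$, one of the two families of geodesics subconverges to a direction making an angle $\geq$ some $\delta_0 > 0$ with $\rho$, yet still issuing from $p$ as (a limit of) minimizing geodesics to points escaping to infinity along essentially the direction of $\rho$; a short argument shows two minimizing rays from $p$ that stay a bounded distance apart while going to infinity cannot meet at a positive angle at $p$ unless $K$ vanishes in a flat strip — and in the split/flat case one gets a line and can reduce dimension or argue directly — yielding the contradiction. Thus all critical points lie in some ball $B(p, R_p)$.

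Finally, having established that $d_p$ has no critical points outside $B(p,R_p)$, I would invoke the standard deformation/isotopy lemma from Grove-Shiohama critical point theory (as in Cheeger's survey \cite{Che} or Grove-Petersen \cite{GP}): on a region free of critical points one can construct a gradient-like vector field whose flow deformation retracts $\{d_p \geq R_p + \epsilon\}$ and shows all level sets $d_p^{-1}(r)$ for $r \geq R_p$ are homeomorphic, with $M$ homeomorphic to the interior of $\overline{B(p, R_p + \epsilon)}$, a compact manifold with boundary. I expect the Toponogov hinge estimate forcing the lower bound on the angle at $p$ to be the delicate point; everything downstream is the well-developed machinery we are permitted to cite.
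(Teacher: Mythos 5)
The paper does not prove this lemma at all; it is quoted as a known result of Gromov with a pointer to Cheeger's survey \cite{Che}, so there is no in-paper argument to compare against. Judged on its own merits, your sketch has a genuine gap at exactly the step you flag as delicate. Applying the criticality of $q_n$ with $v=-\dot\gamma_n(l_n)$ is vacuous: that vector already lies in $\widehat{q_n p}$, so the definition is satisfied by $\xi=v$ with angle $0$, and no second minimizing geodesic $\sigma_n$ is produced. Even if you had two distinct minimizing geodesics from $p$ to $q_n$, they form a digon --- both endpoints are $p$, so there is no ``third side, the chord at $p$'' to estimate, and the Euclidean law-of-cosines comparison you describe does not parse. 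The closing appeal to flat strips and splitting is then patching a step that was never set up correctly.

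The standard (and correct) argument runs differently: criticality is used at the \emph{nearer} point in the direction of a \emph{farther} point. If $x$ is critical for $d_p$ and $y$ satisfies $d(p,y)\ge 2\,d(p,x)$, take a minimal geodesic from $x$ to $y$; criticality of $x$ gives a minimal geodesic from $x$ to $p$ meeting it at angle $\le \pi/2$, and the hinge version of Toponogov (comparison with $\mathbb{R}^2$, since $K\ge 0$) yields $d(p,y)^2\le d(p,x)^2+d(x,y)^2$. Feeding this into the Euclidean comparison angle at $p$ gives $\cos\tilde\measuredangle_p \le d(p,x)/d(p,y)\le 1/2$, hence the true angle at $p$ between the segments $px$ and $py$ is at least $\pi/3$. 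If the critical set were unbounded one could choose critical points $x_k$ with $d(p,x_{k+1})\ge 2\,d(p,x_k)$, producing infinitely many unit vectors at $p$ pairwise separated by angle $\ge\pi/3$, contradicting compactness of $U_pM$; no ray, no limit of geodesics, and no rigidity discussion is needed. Your final paragraph --- the gradient-like vector field and isotopy of level sets giving the topological conclusion --- is the standard machinery and is fine as cited.
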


We note that in the compact setting we always have a universal upper bound on the length of the half-geodesics, namely twice the diameter. In the non-compact setting such a universal bound does not exist, but we offer the following pointwise result.

\begin{cor} Let $M$ be a complete non-compact Riemannian manifold with sectional curvature $K \geq 0$. Then for every $p \in M$ there exists a constant $R_p$ such that any half-geodesic through $p$ must have length $\leq 2 \, R_p$. 
\end{cor}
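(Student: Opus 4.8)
The plan is to combine Gromov's Lemma~\ref{gromov2} with the elementary observation that a half-geodesic forces a critical point of the distance function at a prescribed location. Recall from Section~\ref{half2} that for a half-geodesic $\sigma\colon S^1\to M$ through $p$ the pair $(\sigma(0),\sigma(\pi)) = (p, x) \in M^2$ is a (non-smooth) balanced point of the uniform energy $E\colon M^2\to\mathbb{R}$ by Lemma~\ref{lem1} (applied with $k=2$), so by Proposition~\ref{GS2} the point $x = \sigma(\pi)$ is a Grove-Shiohama critical point of $d_p$. Since $d(p, x) = l(\sigma)/2$, the existence of a half-geodesic through $p$ of length $L$ produces a Grove-Shiohama critical point of $d_p$ at distance exactly $L/2$ from $p$.

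Now I would invoke Lemma~\ref{gromov2}: $M$ is complete, non-compact, with $K\geq 0$, so for the fixed point $p$ there is a radius $R_p$ such that $d_p$ has no critical points outside $B(p,R_p)$. Any Grove-Shiohama critical point of $d_p$ must therefore lie in $\overline{B(p,R_p)}$, i.e.\ at distance $\leq R_p$ from $p$. Combining this with the previous paragraph, if $\sigma$ is a half-geodesic through $p$ of length $L$, then $L/2 = d(p,\sigma(\pi)) \leq R_p$, hence $L \leq 2R_p$, which is exactly the claimed bound.

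There is essentially no obstacle here; the statement is a direct corollary, and the only point requiring a word of care is that the constant $R_p$ from Lemma~\ref{gromov2} genuinely controls \emph{all} Grove-Shiohama critical points of $d_p$ (not merely, say, isolated ones) — but this is precisely what that lemma asserts, since outside $B(p,R_p)$ the function $d_p$ has no critical points in the Grove-Shiohama sense at all. One should also note that the conclusion is vacuous if $d_p$ has no critical points outside $\{p\}$, or if $M$ admits no half-geodesic through $p$, in which case any $R_p$ works; so the statement is the strongest when $R_p$ is taken to be the infimum of radii valid in Lemma~\ref{gromov2}.
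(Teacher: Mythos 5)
Your argument is correct and is exactly the paper's proof: the paper derives the corollary directly from Proposition~\ref{GS2} (a half-geodesic through $p$ places a Grove-Shiohama critical point of $d_p$ at $\sigma(\pi)$, at distance $l(\sigma)/2$ from $p$) together with Lemma~\ref{gromov2}. Your write-up just makes these two steps explicit.
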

\begin{proof} This follows from Proposition~\ref{GS2} and Lemma~\ref{gromov2}.
\end{proof}

As a final application of Theorem~\ref{combined} and Proposition~\ref{GS2} we give a positive result on the existence of half-geodesics on the flat two-torus. 

\begin{ex}[half-geodesics on 2-torus]\label{torus}\normalfont We use our methods to show that any flat torus $T$ with fundamental domain a rectangle has exactly four half-geodesics through any point $p \in T$. Let $p$ be at the center of a fundamental domain for $T$. It is known that the midpoints of the sides together with the corners of the domain project to the three Grove-Shiohama critical points of $d_p$ \cite[Example 1.7]{Che}. By Proposition~\ref{GS2} we identify four candidate half-geodesics through $p$: the two lines through $p$ parallel to the sides and the two diagonals of the domain. By symmetry of the torus we have that each of these candidate half-geodesics corresponds to a class of balanced points of the uniform energy in $T^2$. We therefore apply Theorem~\ref{combined} to conclude that each of these lines is indeed a half-geodesic.
\end{ex}

\section{Acknowledgements} 
The author would like to thank Carolyn Gordon and Craig Sutton for their guidance through all stages of the research process. The author would also like to thank Christina Sormani for suggesting the original problem and for helpful discussions and direction.


\nocite{*}
\bibliographystyle{abbrvnat}
\bibliography{critical}

\end{document}